\theoremstyle{plain} 
\newtheorem{thm}{Theorem}[section]
\newtheorem{lemma}[thm]{Lemma}
\newtheorem{cor}[thm]{Corollary}
\newtheorem{prop}[thm]{Proposition}
\newtheorem{ques}[thm]{Question}
\theoremstyle{definition} 
\newtheorem{defn}[thm]{Definition}
\theoremstyle{remark} 
\newtheorem*{rmk}{Remark}
\newcommand{\mb}{\mathbb}
\newcommand{\mc}{\mathcal}
\newcommand{\Z}{\mathbb{Z}}
\newcommand{\R}{\mathbb{R}}
\newcommand{\C}{\mathbb{C}}
\newcommand{\inv}{^{-1}} 
\newcommand{\Flat}{\text{Flat}(\Sigma)}
\DeclarePairedDelimiter{\paren}{(}{)}           
\DeclarePairedDelimiterX{\innerproduct}[2]{\langle}{\rangle}{#1, #2}
\DeclarePairedDelimiter{\abs}{\lvert}{\rvert}   
\DeclarePairedDelimiterX{\set}[2]{\{}{\}}{#1 \mathrel{}\delimsize\vert\mathrel{} #2} 
\DeclareMathOperator{\UE}{UE}
\DeclareMathOperator{\PUE}{PUE}
\DeclareMathOperator{\CAT}{CAT(0)}
\DeclareMathOperator{\Len}{Len}
\DeclareMathOperator{\EL}{EL}     
\DeclareMathOperator{\NPC}{NPC}   
\DeclareMathOperator{\Area}{Area}
\DeclareMathOperator{\Carr}{Carr}
\DeclareMathOperator{\Curr}{Curr}
\DeclareMathOperator{\diag}{diag}
\DeclareMathOperator{\SL}{SL}
\DeclareMathOperator{\SO}{SO}
\newcommand{\til}{\widetilde}
\newcommand{\wideS}{\widetilde{\Sigma}}
\DeclareMathOperator{\cyl}{cyl}
\title{Thurston's Asymmetric Metric for the Space of Flat Metrics}
\author{Jiajun Shi}
\address{Max Planck Institute for Mathematics in the Sciences, 04103 Leipzig, Germany}
\email{jiajun.shi@mis.mpg.de}
\begin{document}

\begin{abstract}
    Thurston introduced in his seminal work \cite{thurston1998minimal} an asymmetric metric on Teichm\"uller space by the ratio of simple closed curve length. In this paper, we generalize the idea and define an asymmetric metric on the space of unit-area flat metrics coming from half-translation structures on a closed surface. We also discuss two different topologies coming from the asymmetry.
\end{abstract}

\maketitle

\tableofcontents

\section{Introduction}

Fix a finite-type surface $\Sigma$ with negative Euler characteristic. Given a locally $\CAT$ complete metric $g$ on $\Sigma$, each essential nonperipheral free homotopy class of closed curves admits geodesic representatives of minimal length, and all such representatives have the same length. We denote this length by $\ell_g(\gamma)$.

Thurston defined in \cite{thurston1998minimal} the following asymmetric distance on Teichm\"uller space:
\[
K(g_1,g_2)
=
\log \sup_{\gamma\in \pi_1(\Sigma)}
\frac{\ell_{g_2}(\gamma)}{\ell_{g_1}(\gamma)},
\qquad g_i\in \mc T(\Sigma).
\]
He proved that $K(g_1,g_2)\ge 0$, with equality if and only if $g_1=g_2$ as points of $\mc T(\Sigma)$. Equivalently, for any two distinct points $g_1,g_2\in\mc T(\Sigma)$, there exists a simple closed curve $\gamma$ such that
\[
\ell_{g_2}(\gamma)>\ell_{g_1}(\gamma).
\]
Since $K$ also satisfies the triangle inequality, it defines an asymmetric metric on $\mc T(\Sigma)$, now called \emph{Thurston's metric}.

The nondegeneracy of $K$ can be viewed as a form of \emph{volume rigidity}. Let $\mc M$ be a space of metrics on $\Sigma$, considered up to isometry isotopic to the identity. We say that $\mc M$ is volume rigid if, for any two metrics $g_1,g_2\in\mc M$ with
\[
\Area(g_1)=\Area(g_2),
\]
the inequality
\[
\ell_{g_2}(\gamma)\ge \ell_{g_1}(\gamma)
\]
for every essential nonperipheral closed curve $\gamma$ implies that $g_1=g_2$. Croke--Dairbekov proved volume rigidity for negatively curved metrics in \cite{croke2004lengths}. Their proof uses a result of Lopes--Thieullen \cite{lopes2005sub} on sub-actions for the geodesic flow. Constantine--Shrestha--Wu generalized the sub-action argument to locally $\mathrm{CAT}(-1)$ spaces and proved volume rigidity for locally $\mathrm{CAT}(-1)$ metrics in \cite{CONSTANTINE_SHRESTHA_WU_2025}.

Another important rigidity property is \emph{marked length spectrum rigidity}. A space $\mc M$ of metrics is marked length spectrum rigid if, for any two metrics $g_1,g_2\in\mc M$, the equality
\[
\ell_{g_1}(\gamma)=\ell_{g_2}(\gamma)
\]
for every essential nonperipheral closed curve $\gamma$ implies that $g_1=g_2$. If all metrics in $\mc M$ have the same area, then volume rigidity implies marked length spectrum rigidity. It is natural to ask in which settings the two rigidity properties are equivalent.

In this paper, we focus on a family of $\CAT$ metrics arising from half-translation structures on $\Sigma$. Such a metric is locally Euclidean away from finitely many cone singularities, and we call it a \emph{flat metric}. Let $\Flat$ denote the space of unit-area flat metrics on $\Sigma$, considered up to isometry isotopic to the identity. Duchin--Leininger--Rafi proved in \cite{duchin2010length} that $\Flat$ is \emph{simple marked length spectrum rigid}: if two metrics in $\Flat$ have the same lengths for all simple closed curves, then they are equal.

Following the strategy of \cite{duchin2010length}, we prove the following stronger volume-rigidity statement for $\Flat$.

\begin{thm}\label{main theorem}
    Given $q_1,q_2\in \Flat$, suppose that
    \[
    \ell_{q_2}(\gamma)\ge \ell_{q_1}(\gamma)
    \]
    for every essential nonperipheral simple closed curve $\gamma$. Then $q_1=q_2$.
\end{thm}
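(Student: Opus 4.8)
The plan is to follow the strategy of Duchin–Leininger–Rafi for simple marked length spectrum rigidity and upgrade it to the inequality setting, using the area normalization as the extra ingredient that turns the one-sided length comparison into an equality. First I would recall the structure theory of flat metrics from half-translation structures: each $q\in\Flat$ is determined by a holomorphic quadratic differential on some Riemann surface structure, and the flat geometry carries a well-defined measured foliation in every direction, with the horizontal and vertical foliations $\mc F_h(q),\mc F_v(q)$ being the most relevant. The key quantitative tool is that for a simple closed curve $\gamma$, the $q$-length $\ell_q(\gamma)$ dominates the sum (and is controlled by a multiple of the max) of the transverse measures $i(\gamma,\mc F_\theta(q))$ over directions $\theta$; more precisely $\ell_q(\gamma)=\int$ of the transverse measures against a suitable measure on the circle of directions, or at least $\ell_q(\gamma)\ge i(\gamma,\mc F_h(q))+i(\gamma,\mc F_v(q))$ with equality detecting when $\gamma$ is built from saddle connections in those two directions.

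The heart of the argument is to show that the hypothesis $\ell_{q_2}(\gamma)\ge\ell_{q_1}(\gamma)$ for all simple closed curves forces the flat metrics to have the same \emph{marked simple length spectrum}, after which DLR's theorem applies directly. To get the reverse inequality, I would integrate: on a flat surface, the area can be recovered from lengths of curves via an averaging identity (Crofton-type formula on the flat surface), so that $\mathrm{Area}(q)$ is expressible as a limit/average of $\ell_q(\gamma_n)$ over a suitable family of simple closed curves — e.g. long curves approximating a uniformly distributed geodesic, or a combinatorial family coming from a filling system. Since both areas equal $1$, summing or averaging the inequalities $\ell_{q_2}(\gamma_n)\ge\ell_{q_1}(\gamma_n)$ against weights whose total reconstructs the area yields $1=\mathrm{Area}(q_2)\ge \text{(average of }\ell_{q_1})\to \mathrm{Area}(q_1)=1$, forcing asymptotic equality and hence, by a rigidity/extremality argument, $\ell_{q_2}(\gamma)=\ell_{q_1}(\gamma)$ for all the curves in the family; a density argument then extends this to all simple closed curves. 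With simple marked length spectrum equality in hand, the Duchin–Leininger–Rafi rigidity theorem gives $q_1=q_2$.

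I expect the main obstacle to be the integration/averaging step: making precise a Crofton-type formula on a singular flat surface that expresses the unit area as a controlled average of simple-closed-curve lengths, and ensuring the same family of curves is admissible for both $q_1$ and $q_2$ simultaneously so that the termwise inequalities can be summed. One must handle the singularities of the flat metric, the fact that "random" geodesics need not be simple or closed, and the passage from a continuous average (over directions, or over the unit tangent bundle) to an honest family of simple closed curves. A cleaner alternative, which I would pursue in parallel, is to avoid Crofton entirely: use that the systolic-type or intersection-theoretic data $i(\gamma,\mc F_h(q_i))$ and $i(\gamma,\mc F_v(q_i))$ are each $\le \ell_{q_i}(\gamma)$, extract from the inequality that the horizontal and vertical foliations of $q_2$ "dominate" those of $q_1$ in transverse measure, and then invoke that two measured foliations with one dominating the other in intersection number with all simple closed curves, together with equal area (= product of total masses, via $i(\mc F_h,\mc F_v)=\mathrm{Area}$), must coincide — reducing the whole statement to a rigidity statement about pairs of transverse measured foliations, which is the technical core of \cite{duchin2010length}.
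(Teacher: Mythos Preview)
Your averaging idea is correct and is exactly what the paper does first: since weighted simple closed curves are dense in $\mc{MF}(S)$, the hypothesis extends to $i(L_{q_1},\mc F)\le i(L_{q_2},\mc F)$ for all measured foliations, and then the chain
\[
\tfrac{\pi}{2}=i(L_{q_2},L_{q_2})\ge i(L_{q_1},L_{q_2})\ge i(L_{q_1},L_{q_1})=\tfrac{\pi}{2}
\]
(using $L_{q_i}=\tfrac12\int_0^\pi \mc F_{i,\theta}\,d\theta$ and $i(L_q,L_q)=\tfrac{\pi}{2}\mathrm{Area}(q)$) forces equality of the integrands. The genuine gap is the next sentence: ``a density argument then extends this to all simple closed curves.'' The integrands are the \emph{directional} foliations $\mc F_{i,\theta}$, a one-parameter family inside $\mc{MF}(S)$; they are not dense, and the only simple closed curves you recover from them are the cylinder curves of $q_1$ and $q_2$. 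You get $\ell_{q_1}(\gamma)=\ell_{q_2}(\gamma)$ for $\gamma\in\cyl(q_1)\cup\cyl(q_2)$ and nothing more, so you cannot invoke DLR. Your alternative route has the same problem in a different guise: from $\ell_{q_2}(\gamma)\ge\ell_{q_1}(\gamma)$ you cannot deduce $i(\gamma,\mc F_h(q_2))\ge i(\gamma,\mc F_h(q_1))$, because the horizontal variation of $q_2$ is only a lower bound for $\ell_{q_2}$ while for $q_1$ it is also only a lower bound---the inequalities point the wrong way to chain.

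What the paper does after the averaging step is not reduce to DLR but rather run a refined version of the DLR machinery adapted to the one-sided situation. From equality of lengths on cylinder curves it proves $\cyl(q_2)\subset\cyl(q_1)$ by contradiction: if some $\alpha$ were cylinder in $q_2$ but singular in $q_1$, one uses density of cylinder directions in $q_2$ to produce a cylinder curve $\beta$ in $q_2$ whose $q_1$-geodesic representative intersects $\alpha$ \emph{nontrivially} (both angles $\ge\pi$ at each crossing). The DLR Dehn-twist criterion then gives $\ell_{q_1}(T_\alpha\beta)=\ell_{q_1}(\beta)+i(\alpha,\beta)\ell_{q_1}(\alpha)$ while $\ell_{q_2}(T_\alpha\beta)<\ell_{q_2}(\beta)+i(\alpha,\beta)\ell_{q_2}(\alpha)$; since $\alpha,\beta$ are cylinder in $q_2$ their lengths agree in both metrics, forcing $\ell_{q_2}(T_\alpha\beta)<\ell_{q_1}(T_\alpha\beta)$, a contradiction. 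The inclusion $\cyl(q_2)\subset\cyl(q_1)$ (not equality!) is then enough, via closures of cylinder curves in $P\mc{MF}$ and Gardiner--Masur uniqueness, to put $q_1,q_2$ in the same $\SL_2\R$-orbit, and a singular value argument finishes. The point is that full simple marked length spectrum equality is never established and is not needed.
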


Let $\mc S$ denote the set of homotopy classes of essential nonperipheral simple closed curves. In analogy with Thurston's construction, we define
\begin{equation}\label{equ:K}
    K(q_1,q_2)
    =
    \log\sup_{\gamma\in\mc S}
    \frac{\ell_{q_2}(\gamma)}{\ell_{q_1}(\gamma)},
    \qquad q_i\in\Flat.
\end{equation}
There are two differences from Thurston's original definition. First, we take the supremum only over simple closed curves. This gives a smaller supremum than the one over all closed curves, so nondegeneracy for this version implies nondegeneracy for the version using all closed curves. Second, we work in the unit-area space $\Flat$. This normalization is necessary for flat metrics, since homothetic rescaling changes all lengths by the same factor. Without fixing the area, the corresponding asymmetric distance could take negative values. In the hyperbolic setting, the normalization is automatic by the Gauss--Bonnet theorem, and the supremum over simple closed curves agrees with Thurston's original definition by \cite[Proposition 3.5]{thurston1998minimal}.

An immediate consequence of Theorem \ref{main theorem} is the nondegeneracy of $K$.

\begin{cor}\label{cor:nondegenerate}
    For any $q_1,q_2\in\Flat$, we have $K(q_1,q_2)\ge0$, with equality if and only if $q_1=q_2$.
\end{cor}

The function $K$ is generally not symmetric. It therefore gives rise to two natural notions of balls:
\[
    B_q^l(R)=\{t\in\Flat \mid K(t,q)\le R\},
\]
and
\[
    B_q^r(R)=\{t\in\Flat \mid K(q,t)\le R\}.
\]
On the other hand, Duchin--Leininger--Rafi embedded $\Flat$ into the space of geodesic currents $\Curr(\Sigma)$ in \cite[Theorem 4]{duchin2010length}. Thus $\Flat$ also carries the topology inherited from the space of geodesic currents. We compare these topologies and obtain the following result.

\begin{thm}\label{compare topology}
    The topology on $\Flat$ generated by the left balls $B_q^l(R)$ coincides with the topology inherited from geodesic currents. Moreover, convergence in the topology of geodesic currents implies convergence with respect to the right distance: if $q_n\to q$ in the topology of geodesic currents, then $K(q,q_n)\to 0$. However, the right distance has different large-scale behavior: for each fixed $q_0\in\Flat$, the function $K(q_0,\cdot)$ is not proper.
\end{thm}

In addition to proving the nondegeneracy of $K$ via volume rigidity, we also study closed curves whose lengths differ in different flat metrics, which serve as candidates realizing the supremum. We focus on the case where the two metrics lie in the same conformal class. In this setting, extremal length provides a natural way to produce many examples.

\begin{prop}
    Let $q$ and $q'$ be flat metrics in the same conformal class with the same area, and suppose that they are distinct as points of $\Flat$. Let $\mc F$ be the measured foliation of $q$ in direction $\theta$. Then every simple closed curve $\gamma$ whose projective class is sufficiently close to $[\mc F]$ in $\mc{PMF}(\Sigma)$ satisfies
    \[
        \ell_{q'}(\gamma)<\ell_q(\gamma).
    \]
\end{prop}

In fact, there is a notion of length for measured foliations, which can be interpreted as the intersection number between the corresponding measured foliation and the Liouville current of the metric. The proposition above is a consequence of a more general statement: under the same assumptions, the length of $\mc F$ with respect to $q'$ is strictly smaller than its length with respect to $q$. See Corollary \ref{c:foliation_length_conformal} for the precise statement.

Bonahon \cite[Theorem 19]{bonahon1988geometry} proved that, for hyperbolic metrics $h,h'$ and their Liouville currents $L_h,L_{h'}$, the intersection number satisfies
\[
    i(L_h,L_h)\le i(L_h,L_{h'}),
\]
with equality if and only if $h=h'$. As a consequence of the result above, we obtain the following statement, which contrasts with the hyperbolic case and is of independent interest.

\begin{cor}
    Given a Riemann surface $X$ and any two quadratic differentials $q,q'\in Q^1(X)$, let $L_q$ and $L_{q'}$ be the corresponding Liouville currents. Then
    \[
        i(L_q,L_q)\ge i(L_q,L_{q'}),
    \]
    with equality if and only if $q=cq'$ for some $c\in\C$ with $\abs{c}=1$.
\end{cor}

We also mention related work of Sa\u{g}lam--Papadopoulos \cite{sauglam2024thurston}. They considered the space of unit-area flat metrics with a fixed quadrangulation and modified the definition of $K$ by taking ratios of edge lengths in the quadrangulation. They proved that the resulting function is nondegenerate. Although their construction concerns a restricted subset of $\Flat$, it has a geometric interpretation similar to \cite[Theorem 8.5]{thurston1998minimal}: the function agrees with the infimum of Lipschitz constants of maps that fix the vertices of the quadrangulation and are homotopic to the identity.

\subsection{Main ideas and structure}

We briefly describe the proof of Theorem \ref{main theorem}. The first step is to extract information about cylinder curves; see Section \ref{sec:geodesic} for the definitions of cylinder curves and singular closed geodesics. From the assumed inequalities for simple closed curves, we obtain the corresponding inequalities for measured foliations by using the density of weighted simple closed curves. Then, using an argument based on intersections of Liouville currents, in the spirit of \cite{croke2004lengths}, the equality of areas and the length inequalities force cylinder curves in either metric to have the same length.

The next step is to compare the cylinder curves of $q_1$ and $q_2$. We use a technique from \cite{duchin2010length} involving Dehn twists. More precisely, the effect of a Dehn twist on length distinguishes cylinder curves from singular closed geodesics. If $\alpha$ is a cylinder curve, then for any curve $\beta$ intersecting $\alpha$ transversely, the Dehn-twisted curve $T_\alpha\beta$ satisfies a strict length inequality. If $\alpha$ is not a cylinder curve, then one can find a curve $\beta$ for which the corresponding inequality becomes an equality. See Section \ref{sec: Dehn twist} for the detailed statement, and see \cite[\S 3]{duchin2010length} for the original argument.

\begin{figure}[htbp]
    \centering
    \includegraphics[scale=0.7]{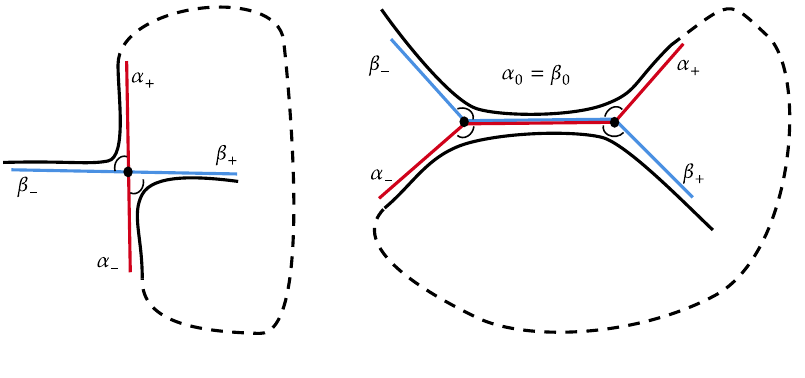}
    \caption{Two possible configurations for $T_\alpha\beta$ when $i(\alpha,\beta)=1$. Whether the black curve is geodesic depends on the intersection angle.}
    \label{fig: Dehn twist intro}
\end{figure}

Using this Dehn-twist criterion, we prove that every cylinder curve for $q_2$ is also a cylinder curve for $q_1$. Suppose otherwise. Then there exists a curve $\alpha$ that is a cylinder curve for $q_2$ but not for $q_1$. The key technical point is to find another simple closed curve $\beta$ such that
\begin{itemize}
    \item $\beta$ is a cylinder curve for $q_2$;
    \item in the metric $q_1$, the curve $\beta$ intersects $\alpha$ in the configuration that realizes equality in the Dehn-twist length estimate.
\end{itemize}
The existence of such a curve $\beta$ follows from the density of cylinder curves in the phase space; see Lemma \ref{l:technical}. The Dehn-twist length estimates then give
\[
\ell_{q_2}(T_\alpha\beta)<\ell_{q_1}(T_\alpha\beta),
\]
contradicting the hypothesis of Theorem \ref{main theorem}.

Once we know the inclusion of cylinder sets, we adapt an argument from \cite[Lemma 22]{duchin2010length}. This produces a pair of measured foliations that determine the $\SL_2\R$-orbits of both $q_1$ and $q_2$. Finally, a singular-value-decomposition argument (see Section \ref{sec:ex}) shows that the two flat metrics must coincide.

The structure of the paper is as follows.
\begin{itemize}
    \item Section \ref{preliminaries} recalls the basic definitions and notation for half-translation surfaces, quadratic differentials, measured foliations, and geodesic currents.
    \item Section \ref{examples} gives explicit examples in which one can find curves realizing strict length inequalities. It also discusses an example involving surfaces with boundary.
    \item Section \ref{proof} proves Theorem \ref{main theorem}.
    \item Section \ref{topology} compares the topology coming from geodesic currents with the topologies induced by the asymmetric distance $K$.
    \item Section \ref{ex extremal length} studies the special case of flat metrics in the same conformal class.
    \item Appendix \ref{appendix} discusses the notion of length for measured foliations and gives a new formula for the extremal length of measured foliations.
\end{itemize}

\subsection*{Acknowledgements}

The author would like to thank his advisor Anna Wienhard for suggesting this problem and for her kind support. The author also thanks Xian Dai, Alex Wright, Christopher Leininger, James Farre, \c{C}a\u{g}lar Uyanik, Beatrice Pozzetti, Dylan Thurston and Didac Martinez-Granado for helpful conversations.  The author was partially supported by the Deutsche Forschungsgemeinschaft (DFG, German Research Foundation), Project-ID 281071066, TRR 191.

\section{Preliminaries}\label{preliminaries}

Throughout the paper, $\Sigma$ denotes a finite-type surface with negative Euler characteristic and without boundary components. We regard $\Sigma$ as the complement of a finite set $\mc E$ of marked points in a closed surface $\widehat{\Sigma}$:
\[
\Sigma=\widehat{\Sigma}\setminus\mc E.
\]
By uniformization, $\Sigma$ admits a complete finite-area hyperbolic metric.

This section fixes the terminology and notation used in the remainder of the paper. We first introduce half-translation surfaces from the viewpoints of quadratic differentials and geometric structures. We then describe their associated flat metrics, as well as the relevant properties of geodesics and of the universal cover equipped with its $\CAT$ metric completion.

Next, we review measured foliations and the directional measured foliations naturally associated with a half-translation surface. Finally, we recall the theory of geodesic currents, first for closed surfaces following \cite{bonahon1988geometry}, and then for finite-type surfaces following \cite{bonahonBoutsVarietesHyperboliques1986,duchin2010length,burgerCurrentsSystolesCompactifications2021}.

\subsection{Half-translation surface}

In this subsection, we recall two equivalent descriptions of half-translation surfaces. We first give the analytic description in terms of quadratic differentials. Let $X$ be a complex structure on the closed surface $\widehat{\Sigma}$. Thus $X$ is given by a maximal atlas whose charts take values in $\C$ and whose transition maps are holomorphic. We shall also use $X$ to denote the corresponding Riemann surface. Let $T^*X$ denote its holomorphic cotangent bundle.

\begin{defn}
A \emph{quadratic differential} on $\Sigma$ is a pair consisting of a complex structure $X$ on $\widehat{\Sigma}$ and a nonzero meromorphic section $q$ of $T^*X\otimes T^*X$ such that
\begin{itemize}
\item $q$ is holomorphic on $\Sigma=\widehat{\Sigma}\setminus \mc E$;
\item $q$ has at most simple poles at the marked points in $\mc E$.
\end{itemize}
\end{defn}

Locally, a quadratic differential has the form $f(z)\,dz^2$, where $z$ is a local coordinate for the complex structure and $f(z)$ is meromorphic. Away from the marked points, the function $f$ is holomorphic. The following lemma states that one can choose local coordinates in which the quadratic differential has a standard form.

\begin{lemma}[{\cite[\S 6]{strebel1984quadratic}}]\label{l:standard_chart}
Let $q$ be a quadratic differential on $\Sigma$. Let $\operatorname{Zero}(q)$ denote the set of zeros of $q$. Then there exists an atlas ${(U_i,w_i)}$ for $\widehat{\Sigma}$ such that:
\begin{itemize}
\item each $U_i$ contains at most one point of $\operatorname{Zero}(q)\cup \mc E$;
\item if $U_i$ contains no point of $\operatorname{Zero}(q)\cup \mc E$, then $q=dw_i^2$ on $U_i$;
\item if $U_i$ contains a point $p\in \operatorname{Zero}(q)$, then
\[
q=w_i^k\,dw_i^2
\]
on $U_i\setminus{p}$, where $w_i(p)=0$ and $k=\operatorname{ord}_p(q)\ge 3$.
\item if $U_i$ contains a point $p\in \mc E$, then
\[
q=w_i^k\,dw_i^2
\]
on $U_i\setminus{p}$, where $w_i(p)=0$ and $k=\operatorname{ord}_p(q)\ge -1$.
\end{itemize}
\end{lemma}

Such coordinates $w_i$ are called \emph{natural coordinates}, and the corresponding neighborhoods $U_i$ are called \emph{standard neighborhoods}.

Quadratic differentials are equivalent to certain geometric structures on the surface. We recall this second description next. Let $M$ be a manifold, let $X$ be a model space, and let $G$ be a group acting on $X$ by homeomorphisms. A $(G,X)$-structure on $M$ is an atlas whose charts are homeomorphisms onto open subsets of $X$ and whose transition maps are restrictions of elements of $G$.

\begin{defn}\label{def:geometric_structure}
A \emph{half-translation structure} on $\Sigma=\widehat{\Sigma}\setminus\mc E$ is given by the following data:
\begin{itemize}
\item a finite set of points $P\subset \Sigma$;
\item an $(\R^2\rtimes \Z/2\Z,\R^2)$-structure on $\Sigma\setminus P$, where $\R^2\rtimes \Z/2\Z\subset \operatorname{Isom}^+(\R^2)$ is the semidirect product of the translation group $\R^2$ and the group generated by rotation by $\pi$;
\item for each point $p\in P$, the corresponding point in the metric completion has a neighborhood isometric to a Euclidean cone with cone angle $k\pi$ for some integer $k\ge 3$;
\item for each point $p\in \mc E$, the corresponding point in the metric completion has a neighborhood isometric to a Euclidean cone with cone angle $k\pi$ for some integer $k\ge 1$.
\end{itemize}
The points in $P$ are called \emph{singular points} or \emph{cone points} in $\Sigma$, and the points in $\Sigma\setminus P$ are called \emph{regular points}.
\end{defn}

\begin{prop}[{\cite[\S 1.8]{masur2002rational}}]
The two descriptions above are equivalent. More precisely, a half-translation structure on $\Sigma$ determines a complex structure on $\widehat{\Sigma}$ and a corresponding quadratic differential on $\Sigma$. Conversely, a quadratic differential on $\Sigma$ determines a half-translation structure. If $p\in P\cup\mc E$, then the cone angle at $p$ is
\[
\bigl(\operatorname{ord}_p(q)+2\bigr)\pi.
\]
In particular, the points of $P$ correspond precisely to the zeros of $q$ lying in $\Sigma$.
\end{prop}

A surface equipped with a half-translation structure is called a \emph{half-translation surface}. By the preceding proposition, a half-translation surface can equivalently be viewed as a Riemann surface equipped with a quadratic differential.

\subsection{Flat metrics and quadratic differentials}\label{intro metric}

Let $\Sigma$ be a half-translation surface. The half-translation structure naturally induces a metric on $\Sigma \setminus P$, inherited from the Euclidean metric on the local $\mathbb{R}^2$ charts, since the transition maps preserve the Euclidean metric. Alternatively, this metric can be constructed using natural coordinates; see \cite{strebel1984quadratic}. Such a metric is a special case of a \emph{flat cone metric}, which is locally Euclidean except at finitely many points, with each exceptional point having a neighborhood isometric to a Euclidean cone. We refer to metrics arising from half-translation structures as \emph{flat metrics} or \emph{half-translation metrics}. For the precise difference between half-translation metrics and general flat cone metrics, see \cite[Theorem 2.6]{shiLiouvilleCurrentHolomorphic2024}.

The notions of length and area are well-defined for these metrics. These quantities can also be computed directly from the quadratic differential. Let $q$ be a quadratic differential, and write locally $q=f(z)\,dz^2$. If $\gamma$ is a piecewise $C^1$ curve, its length with respect to $q$ is given by
\[
    \Len_q(\gamma)=\int_\gamma \sqrt{\abs{f(z)}}\,\abs{dz}
\]
and the total area is
\begin{equation}\label{equ:area}
    \Area(q)=\int_X \abs{f(z)}\,dx\,dy.
\end{equation}
For further details, see \cite[\S 5.3]{strebel1984quadratic}.

Let $\Flat$ denote the space of unit-area half-translation metrics on $\Sigma$ up to isometries isotopic to the identity, and let $Q(\Sigma)$ be the space of quadratic differentials on $\Sigma$ up to biholomorphisms isotopic to the identity. Let $Q^1(\Sigma) \subset Q(\Sigma)$ be the subspace of unit-area quadratic differentials. We can identify $\Flat$ with a quotient space of $Q^1(\Sigma)$ as follows.

Recall that a quadratic differential $q$ admits an atlas of natural charts $\{(U_i,w_i)\}$, in which $q$ takes the local form $dw_i^2$ away from its zeros and marked points. The group $\SL_2\R$ acts on $Q(\Sigma)$ by applying real-linear transformations to these natural coordinates. Namely, if $q\in Q(\Sigma)$ has a natural atlas $\{(U_i,w_i)\}$ and $A\in\SL_2\R$, then $Aq$ is defined by the atlas $\{(U_i,A\circ w_i)\}$. More generally, for $A\in\mathrm{GL}_2^+(\R)$, the area changes by
\[
    \Area(Aq)=\abs{\det(A)}\Area(q).
\]
Thus the action of $\SL_2\R$ preserves area and restricts to an action on $Q^1(\Sigma)$.

Two quadratic differentials induce the same flat metric if and only if they differ by multiplication by a complex number of unit modulus. More precisely, multiplication of a quadratic differential by $e^{i\theta}$ corresponds, in natural coordinates, to rotation by angle $\theta/2$. Therefore it corresponds to the action of the matrix
\[
    \begin{pmatrix}
        \cos(\theta/2) & -\sin(\theta/2) \\
        \sin(\theta/2) & \cos(\theta/2)
    \end{pmatrix}.
\]
In other words, we have the identification
\[
    \Flat \cong Q^1(\Sigma)/\SO_2\R.
\]
In what follows, by a slight abuse of notation, we will use $q$ to denote both the quadratic differential and its underlying flat metric, making no distinction between the two unless necessary.

\subsection{Geodesics}\label{sec:geodesic}

Let $(\Sigma,q)$ be a half-translation surface equipped with its induced flat metric. In this subsection, we recall the basic local structure of geodesics on $(\Sigma,q)$.

Recall that a \emph{geodesic} in a metric space is a curve that locally minimizes distance. Every regular point in $\Sigma\setminus P$ has a Euclidean neighborhood, and hence a geodesic passing through such a point is locally a straight line segment. At a singular point in $P$, a geodesic passing through the point is locally a concatenation of two straight line segments meeting at the cone point. These two segments determine two angles at the cone point. For the concatenated path to be locally minimizing, both angles must be at least $\pi$; otherwise, one can shorten the path by replacing it with a segment bypassing the cone point, as in Figure \ref{fig:geodesic_angle}.

\begin{figure}[htbp]
    \centering
    \includegraphics[width=0.5\textwidth]{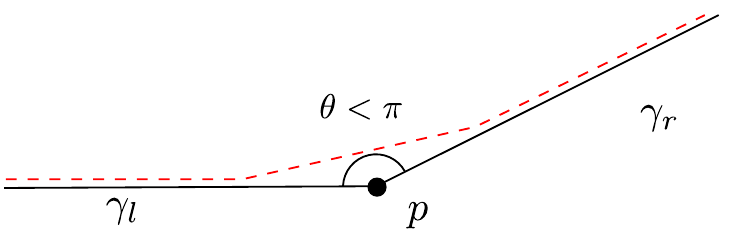}
    \caption{The point $p$ is a cone point, and $\gamma_l$ and $\gamma_r$ are two straight line segments meeting at $p$. If the angle on the left is less than $\pi$, then the red dashed curve is shorter than the concatenation of $\gamma_l$ and $\gamma_r$.}
    \label{fig:geodesic_angle}
\end{figure}

A geodesic connecting two cone points with no other cone points in its interior is called a \emph{saddle connection}. Consequently, any geodesic can be viewed as a concatenation of saddle connections, possibly with ordinary straight line segments at its ends, such that any two consecutive segments make angles of at least $\pi$ at their common cone point.

Among all geodesics, closed geodesics are of particular interest. If a closed geodesic contains no cone points, then the discreteness of the singular set $P$ implies that it has a neighborhood isometric to a Euclidean cylinder. This cylinder can be enlarged until its boundary meets one or more cone points. Such a maximal cylinder is foliated by closed geodesics homotopic to its core curve. Its boundary components are themselves closed geodesics, formed by concatenations of saddle connections, and the angle facing the interior of the cylinder is exactly $\pi$ at each cone point.

We refer to the closed geodesics in the interior of such a cylinder, as well as its boundary curves, as \emph{cylinder curves}. A closed geodesic that is not a cylinder curve is called a \emph{singular closed geodesic}. For such a curve, there must be at least one cone point on each side of the geodesic where the angle subtended by adjacent saddle connections is strictly greater than $\pi$.

\begin{figure}[htbp]
    \centering
    \includegraphics[width=0.5\textwidth]{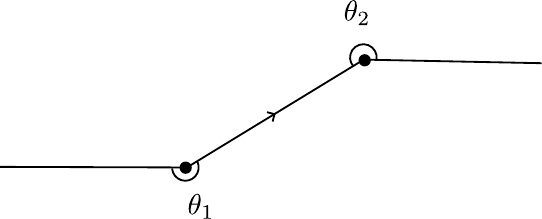}
    \caption{A local picture of a singular closed geodesic, oriented from left to right. The angles $\theta_1$ and $\theta_2$ lie on two different sides of the curve and are both greater than $\pi$.}
\end{figure}

\subsection{CAT(0) properties}

For a flat metric on a half-translation surface, all cone angles are strictly greater than $2\pi$, which implies that the metric is locally $\CAT$ (see \cite[Theorem II.5.5]{bridsonMetricSpacesNonPositive1999}). Note that the complete surface $\widehat{\Sigma}$ may not be locally $\CAT$, because it is allowed to have cone angle $\pi$ at marked points. 

In this subsection, we recall several fundamental properties of $\CAT$ metrics and their applications to flat metrics, with a particular focus on the universal cover. For a detailed introduction to $\CAT$ spaces, we refer the reader to \cite{bridsonMetricSpacesNonPositive1999}. 

By the Cartan-Hadamard theorem, if $S$ is equipped with a complete metric which is locally $\CAT$, then the universal cover $\widetilde{S}$ is a $\CAT$ space homeomorphic to $\R^2$. The Gromov boundary $\partial \widetilde{S}$ is defined to be the set of geodesic rays with a fixed basepoint up to bounded Hausdorff distance. $\partial\til S$ is homeomorphic to the circle $S^1$, and every geodesic in $\widetilde{S}$ has two distinct endpoints in $\partial \widetilde{S}$. We collect several properties of geodesics on surfaces equipped with locally $\CAT$ metrics, as well as properties of their universal covers.

\begin{prop}\label{p.CAT(0)}
    Let $S$ be a surface equipped with a locally $\CAT$ metric. Then the following hold:
    \begin{itemize}
        \item Every free homotopy class of closed curves contains a geodesic representative that minimizes length among all curves in its class \cite[Theorem II.4.13]{bridsonMetricSpacesNonPositive1999}.
        \item Any two points in the universal cover $\til S$ are connected by a unique geodesic \cite[Proposition II.1.4]{bridsonMetricSpacesNonPositive1999}. 
        \item The distance function $d \colon \til S \times \til S \to \R$ is convex \cite[Proposition II.2.2]{bridsonMetricSpacesNonPositive1999}.
        \item Any two distinct points in the Gromov boundary $\partial \til S$ are connected either by a unique geodesic in $\til S$, or by a family of parallel geodesics that foliate a Euclidean strip. In the latter case, each geodesic in this Euclidean strip projects to a closed geodesic on $S$ \cite[Theorems II.2.13, II.9.33]{bridsonMetricSpacesNonPositive1999}.
    \end{itemize}
\end{prop}

In the case of half-translation surfaces, if the set of marked points $\mc E$ is nonempty, then the metric is not complete. To resolve it, we follow \cite[\S 2.4]{duchin2010length} and consider the universal covering $\til\Sigma$ and its metric completion $\bar{\Sigma}$. $\til\Sigma$ can be regarded as branching covering which is branched over $\mc E$. Every marked point has a neighborhood in $\Sigma$ which lifts to a Euclidean cone of infinite angle in $\til\Sigma$. Then $\bar{\Sigma}$ is obtained from $\til\Sigma$ by adding one point to each lift of such a neighborhood, corresponding to the cone point of infinite cone angle. Then $\bar{\Sigma}$ is a complete $\CAT$ metric space. 

Any homotopy class of closed curves $\gamma\subset\Sigma$ admits a geodesic representative, which is the projection of the corresponding geodesic in $\bar{\Sigma}$. We define its length with respect to the flat metric $q$ as the length of its geodesic representative, denoted by $\ell_q(\gamma)$. 

Next, we define the transverse intersection of geodesics for flat metrics. Two geodesics $\alpha$ and $\beta$ in $\Sigma$ are said to \emph{intersect transversely} if there exist lifts $\til\alpha$ and $\til\beta$ with endpoints $\alpha^\pm, \beta^\pm \in \partial \til \Sigma$ such that the pairs $\{\alpha^\pm\}$ and $\{\beta^\pm\}$ interlace. More precisely, this occurs if each connected component of $\partial \til \Sigma \setminus \{\alpha^+, \alpha^-\}$ contains exactly one of the points $\{\beta^+, \beta^-\}$.

\begin{figure}[htbp]
    \centering
    \includegraphics[width=0.5\textwidth]{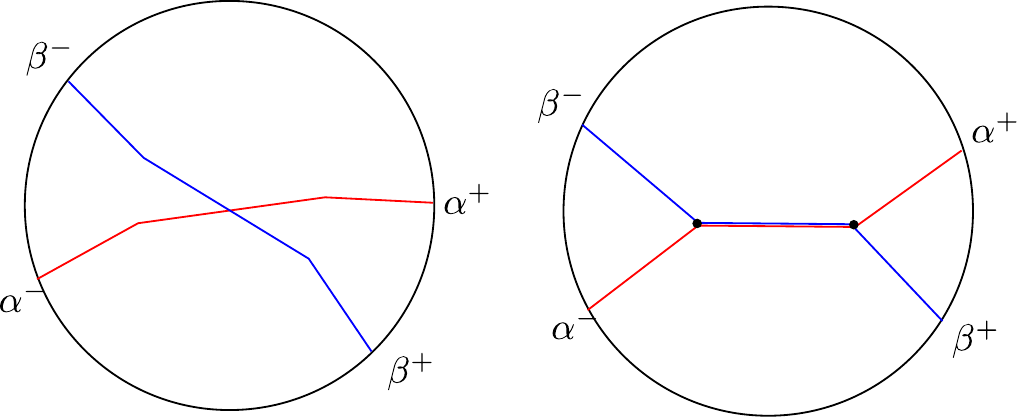}
    \caption{Two different cases of transverse intersection of geodesics $\alpha,\beta$. On the left $\alpha\cap\beta$ is a single point, while on the right $\alpha\cap\beta$ is a concatenation of saddle connections.}
\end{figure}

\begin{prop}[{\cite[Proposition 3.4]{pozzettiFinslerMetrics$12026}}]\label{p.transverse_intersection}
    Let $(\Sigma, q)$ be a surface equipped with a flat metric. If two geodesics $\alpha$ and $\beta$ intersect transversely, then for each connected component of their intersection, one of the following holds:
    \begin{enumerate}
        \item The component is a single point $p$, and the two geodesics cross transversely at $p$.
        \item The component is a concatenation of saddle connections; in this case, $\alpha$ approaches from one side of $\beta$, coincides with $\beta$ along these common saddle connections, and then exits on the opposite side.
    \end{enumerate}
    Furthermore, if either $\alpha$ or $\beta$ is a cylinder curve, the intersection consists only of isolated points (Case 1).
\end{prop}

\subsection{Measured foliations}
Measured foliations play an important role in Teichm\"uller theory and the geometry of surfaces. They are also closely related to half-translation surfaces, as a half-translation surface gives rise to a family of measured foliations parametrized by direction. In this subsection, we recall the formal definition of measured foliations and clarify their relationship with half-translation surfaces.

\begin{defn}
Let $\Sigma=\widehat{\Sigma}\setminus \mc E$ be a finite-type topological surface, where $\mc E$ is the set of puncture points. A \emph{singular foliation} $\mathcal{F}$ on $\Sigma$ consists of a finite subset $P$ of singular points and a covering of $\Sigma\setminus P$ with charts $\phi_i:U_i\to \R^2$ such that the transition functions on overlaps are of the form  \[
    f_{ij}(x, y) = (h_{ij}(x, y), g_{ij}(y))
    \]
    where $h_{ij}$ is continuous and $g_{ij}$ is a strictly monotonic continuous function. Each point in $P$ is $p$-pronged for some $p\ge 3$, while points in $\mc E$ are allowed to be 1-pronged singularities.

    
    
\end{defn}

\begin{defn}\label{def:measured_foliation}
    Let $\mc F$ be a singular foliation on $\Sigma$. A \emph{transverse invariant measure} $\mu$ is an assignment of a measure to each transverse arc such that if two transverse arcs $\alpha$ and $\beta$ are isotopic through transverse arcs whose endpoints remain in the same leaf, then $\mu(\alpha)=\mu(\beta)$. A \emph{measured foliation} is a singular foliation with a transverse invariant measure.
\end{defn}

The space of measured foliations on $\Sigma$ up to isotopy and Whitehead equivalence, denoted by $\mc{MF}(\Sigma)$, is homeomorphic to $\mb R^{6g-6+2n}$. Its projectivization with respect to positive scalar multiplication on transverse measures is denoted by $\mc{PMF}(\Sigma)$.

There is a structure theorem for measured foliations about the decomposition into minimal components and annuli. We state here the version in \cite{vianaDynamicsIntervalExchange} which deals with measured foliations on a closed surface. For a surface with punctures, we can pass to the orienting double cover which branches over odd-pronged singularities. The classification upstairs descends to the decomposition of the original measured foliation.

\begin{thm}[{\cite[Theorem 3.13]{vianaDynamicsIntervalExchange}}]\label{thm:foliation_decomposition}
    For a measured foliation on the surface, there exists finitely many pairwise disjoint open domains $D_i$ whose closures cover the surface, such that for each $i$
    \begin{itemize}
        \item either $D_i$ is homeomorphic to a cylinder and foliated by closed leaves,
        \item or $D_i$ consists of non-closed leaves, each of which is dense in $D_i$,
    \end{itemize}
    and in either case, $D_i$ are sub-surfaces with boundary consisting of saddle connections.
\end{thm}

Given a half-translation surface represented by a quadratic differential $q$, we can define a family of measured foliations $\{\mc F_\theta\}_{\theta\in\R/\pi\Z}$ as follows. Following Definition \ref{def:measured_foliation}, we can give the foliation structure by natural charts in Lemma \ref{l:standard_chart} composed with $\theta$-rotation with the Lebesgue measure. The leaves are locally straight lines in natural charts which make angle $\theta$ with the horizontal direction. The family of such measured foliations are called \emph{directional foliations}. We denote the directional foliation with parameter $\theta$ by $\mc F_\theta^q$ or simply $\mc F_\theta$. In particular, we call $\mc F_0$ the \emph{horizontal foliation} and $\mc F_{\pi/2}$ the \emph{vertical foliation}.

\subsection{Geodesic currents on closed surfaces}\label{sec:geodesic_current}

In this subsection, we review the theory of geodesic currents on closed surfaces developed by Bonahon \cite{bonahon1988geometry}. Geodesic currents generalize both closed curves and measured foliations, providing a powerful tool for the proof of our main results.

Given a closed surface $\Sigma$ equipped with a hyperbolic metric $h$, consider the space
\[
G(\til \Sigma) := \big( (\partial\til \Sigma \times \partial\til \Sigma) \setminus \Delta \big) / {\sim},
\]
where $\Delta$ is the diagonal and the equivalence relation is given by $(x,y) \sim (y,x)$. This space is naturally identified with the space of unoriented bi-infinite geodesics in $\til \Sigma$ up to bounded Hausdorff distance. Because different metrics induce homeomorphic Gromov boundaries, we typically omit the ambient metric $h$.

\begin{defn}
    A \emph{geodesic current} is a $\pi_1(\Sigma)$-invariant, locally finite Borel measure on $G(\til \Sigma)$.
\end{defn}

Let $\mathcal{C}(\Sigma)$ denote the space of geodesic currents on $\Sigma$. A fundamental class of examples arises from closed curves. For any closed curve $\gamma$ in $\Sigma$, consider its geodesic representative. Its preimage in the universal cover $\til \Sigma$ consists of a countable family of bi-infinite geodesics. The endpoints of these geodesics form a discrete subset of $G(\til \Sigma)$. By assigning a Dirac mass to each point in this subset, we construct a counting measur, which is a geodesic current. By an abuse of notation we also denote it by $\gamma$.

There is also a natural embedding of $\mc{MF}(\Sigma)$ into $\Curr(\Sigma)$ (see \cite[\S 11.3]{kapovich2001hyperbolic}). This embedding is compatible with the decomposition of measured foliations described in Theorem \ref{thm:foliation_decomposition}. Specifically, if a measured foliation $\mc F$ decomposes into open domains $D_i$ as in Theorem \ref{thm:foliation_decomposition}, the foliation restricted to each $D_i$ can be extended to a measured foliation on the entire surface. This is achieved by choosing a spine for the complement of $D_i$ and foliating it with leaves parallel to the boundary of $D_i$ (for details, see \cite[\S 5.4]{fathiThurstonsWorkSurfaces2012}). By a slight abuse of notation, we use $\mc F$ to denote both the measured foliation and its associated geodesic current, and we let $\mc F_i$ denote the geodesic current associated with $D_i$. In the space of geodesic currents, we then have the decomposition
\[ \mc F = \sum_i \mc F_i. \]

Furthermore, Bonahon \cite{bonahon1988geometry} constructed a continuous, symmetric bilinear intersection form on $\Curr(\Sigma) \times \Curr(\Sigma)$ that extends the geometric intersection number of closed curves. Let $\mc P \subset G(\til \Sigma) \times G(\til \Sigma)$ be the subspace consisting of pairs of linked endpoints. Two geodesic currents $\alpha$ and $\beta$ induce a $\pi_1(\Sigma)$-invariant product measure $\alpha \times \beta$ on $G(\til \Sigma) \times G(\til \Sigma)$. The restriction of this measure to $\mc P$ descends to a measure on the quotient space $\mc P/\pi_1(\Sigma)$, which we still denote by $\alpha \times \beta$.

\begin{thm}[\cite{bonahon1988geometry}]\label{intersection number}
    There exists a continuous, bilinear and symmetric function\[
    i: \Curr(\Sigma) \times \Curr(\Sigma) \to \R
    \]
    defined by
    \[
    i(\alpha,\beta)=\int_{\mc P/\pi_1(S)}\alpha\times\beta\] for geodesic currents $\alpha,\beta\in\Curr(\Sigma)$. It restriction to closed curves agrees with the geometric intersection number.
\end{thm}





\subsection{Geodesic currents on surfaces with punctures}

When $\Sigma$ is a punctured surface, we take the viewpoint of \cite{burgerCurrentsSystolesCompactifications2021} and include the relevant results in this subsection. Let $\Sigma$ be equipped with a complete hyperbolic metric $h$ of finite area. Then similar to the case of closed surfaces, a \emph{geodesic current} is a $\pi_1(\Sigma)$-invariant, locally finite Borel measure on $G(\wideS)$.

The subtlety is about the intersection number. On surfaces with punctures, the intersection number is not always continuous \cite{sasaki2022currents}. To resolve this, we consider a subspace of geodesic currents defined in the following way. Given a geodesic current $\mu\in\Curr(\Sigma)$, let the \emph{carrier} $\Carr(\mu)\subset \Sigma$ be the closed subset obtained by projecting all geodesics to $\Sigma$ whose endpoints are in the support of $\mu$. For a compact subset $K\subset\Sigma$, define \[
\Curr_K(\Sigma)=\set{\mu\in\Curr(\Sigma)}{\Carr(\mu)\subset K}.\]

\begin{thm}[{\cite[Theorem 2.4]{burgerCurrentsSystolesCompactifications2021}}]
    For every compact subset $K\subset \Sigma$, the intersection \[
    i:\Curr(\Sigma)\times \Curr_K(\Sigma)\to\R\]
    is continuous and finite.
\end{thm}

\begin{rmk}
    The finiteness is not contained in \cite[Theorem 2.4]{burgerCurrentsSystolesCompactifications2021}, but in a paragraph before the theorem. It is also easy to verify because of the compactness of the carrier.
\end{rmk}

It is a classical fact that $\mc{MF}(\Sigma)$ is homeomorphic to the space of measured laminations with compact support, denoted by $\mc{ML}_c(\Sigma)$. One can prove it, for example, by showing that both $\mc{MF}(\Sigma)$ and $\mc{ML}_c(\Sigma)$ are closure of weighted simple closed curves, see \cite[Proposition 2.11]{kahnConformalSurfaceEmbeddings2022} and \cite[Theorem 3.1.3]{pennerCombinatoricsTrainTracks1992}. Moreover, similar to the case of closed surfaces, there is a natural embedding of $\mc{MF}(\Sigma)$ into the space of geodesic currents $\Curr(\Sigma)$.

\begin{lemma}
    There exists a compact subset $K\subset\Sigma$ such that $\mc{MF}(\Sigma)\hookrightarrow\Curr_K(\Sigma)$.
\end{lemma}

\begin{proof}
    We view measured foliations as measured laminations with compact support. By \cite[Lemma 2.12]{erlandssonMirzakhanisCurveCounting2022}, there is a compact subset $K\subset\Sigma$ such that every measured lamination with compact support is contained in $K$. 
\end{proof}

Now we focus on the flat metrics on $\Sigma$. Duchin--Leininger--Rafi \cite{duchin2010length} constructed a geodesic current which records the length information of closed curves for a given flat metric. 

\begin{thm}[{\cite[Lemma 9, Proposition 26]{duchin2010length}}]\label{thm:liouville_current}
    Let $q$ be a flat metric of unit area on $\Sigma$, represented by a quadratic differential, and let $\mc F_\theta$ be the measured foliation associated with $q$ in the direction $\theta$. Then \[
    L_q=\frac12 \int_0^\pi\mc F_\theta\,d\theta\]
    is a well-defined geodesic current. Moreover, 
    \begin{enumerate}
        \item for any closed curve $\gamma\subset\Sigma$, we have $i(L_q,\gamma)=\ell_q(\gamma)$.
        \item $i(L_q, L_q)=\frac{\pi}{2} $.\label{p:self_intersection}
    \end{enumerate}
\end{thm}

Since all measured foliations correspond to geodesic currents in $\Curr_K(\Sigma)$ for some compact subset $K\subset\Sigma$, as a linear combination of measured foliations, $L_q$ is also a geodesic current in $\Curr_K(\Sigma)$. Then the intersection $i(L_q,\cdot)$ is well-defined and finite. We call it the \emph{Liouville current} of the flat metric $q$. 


\section{Basic examples}\label{examples}

In this section we show that, in certain special cases, one can compute explicitly the value of $K(q_1,q_2)$, where $K$ is the function defined in Equation~\eqref{equ:K} and $q_1,q_2$ are flat metrics.

\subsection{Affinely equivalent metrics}\label{sec:ex}

Recall the action of $\SL_2\R$ on the space of quadratic differentials, which acts $\R$-linearly on the natural coordinates. We say two flat metrics are \emph{affinely equivalent} if they admit quadratic differential representatives lying in the same $\SL_2\R$-orbit. By standard linear algebra, the singular values of any matrix $A \in \SL_2\R$ are $\lambda_A$ and $\lambda_A^{-1}$, where we conventionally assume $\lambda_A \ge 1$.

\begin{prop}\label{p:baby_case}
    Let $q_1, q_2 \in \Flat$ be two affinely equivalent flat metrics. If their quadratic differential representatives (still denoted $q_1$ and $q_2$) satisfy $q_2 = A q_1$ for some matrix $A \in \SL_2\R$, then 
    \[ K(q_1, q_2) = K(q_2, q_1) = \log \lambda_A. \]
\end{prop}

Note that $\lambda_A = 1$ if and only if $A \in \SO_2\R$. In particular, this implies $K(q_1, q_2) \ge 0$, with equality if and only if $q_1$ and $q_2$ induce the same flat metric. This establishes a special case of Corollary \ref{cor:nondegenerate}.

\begin{proof}
    Let $A = U \Lambda V$ be the singular value decomposition of $A$, where $U, V \in \SO_2\R$ and $\Lambda = \operatorname{diag}(\lambda_A, \lambda_A^{-1})$. Because flat metrics are invariant under rotation, $V q_1$ and $U^{-1} q_2$ represent the same flat metrics in $\Flat$ as $q_1$ and $q_2$, respectively. Furthermore, they satisfy $U^{-1} q_2 = \Lambda (V q_1)$. Thus, without loss of generality, we may assume that the matrix $A$ connecting the representatives $q_1$ and $q_2$ is the diagonal matrix $\Lambda$.

    Now, consider the action of $\Lambda$ on the natural charts. Let $s$ be a saddle connection of the metric $q_1$ with length $l$ and direction $\theta$. Its horizontal and vertical projections have lengths $l\cos\theta$ and $l\sin\theta$, respectively. The matrix $\Lambda$ maps $s$ to a saddle connection $s'$ in the metric $q_2$, whose horizontal and vertical projections have lengths $\lambda_A l \cos\theta$ and $\lambda_A^{-1} l \sin\theta$. Thus, the length of the mapped saddle connection $s'$ is 
    \begin{equation}\label{equ:length_saddle}
        l' = \sqrt{(\lambda_A l \cos\theta)^2 + (\lambda_A^{-1} l \sin\theta)^2}.
    \end{equation}
    It follows that $l' \le \lambda_A l$, with equality if and only if $\theta = 0$.

    Any closed geodesic $\gamma$ in the metric $q_1$ is a concatenation of finitely many saddle connections, each of which is mapped to a saddle connection in $q_2$. The concatenation of these image segments forms a curve in the same free homotopy class as $\gamma$. However, this image curve may not be a true geodesic in $q_2$, as the affine map alters angles and may violate the $\pi$-angle condition described in Section \ref{sec:geodesic}. Therefore, we have
    \[ \ell_{q_2}(\gamma) \le \text{length of the image curve} \le \lambda_A \ell_{q_1}(\gamma). \]
    Taking the supremum over all closed curves $\gamma$ yields the upper bound
    \[ K(q_1, q_2) \le \log\lambda_A. \]

    To establish the lower bound, we apply a result of Masur \cite[Theorem 2]{masurClosedTrajectoriesQuadratic1986}, which states that the set of directions admitting cylinder curves is dense. We can therefore choose a sequence of cylinder curves $\gamma_n$ in directions $\theta_n$ such that $\theta_n \to 0$. Because a cylinder curve in $q_1$ consists of parallel saddle connections, it maps to a cylinder curve in $q_2$, which is necessarily a geodesic. By Equation \eqref{equ:length_saddle}, we obtain
    \[ \lim_{n \to \infty} \log \frac{\ell_{q_2}(\gamma_n)}{\ell_{q_1}(\gamma_n)} = \log\lambda_A, \]
    which provides the lower bound. Combining the two bounds yields $K(q_1, q_2) = \log\lambda_A$. 
    
    Finally, since the singular values of $A^{-1}$ are identically $\lambda_A$ and $\lambda_A^{-1}$, applying the same argument in reverse gives $K(q_2, q_1) = \log\lambda_A$.
\end{proof}

\subsection{Genus 2 examples}

We now present an example demonstrating that $K(\cdot,\cdot)$ is not always symmetric. In fact, asymmetry is expected to be the generic behavior (see Section \ref{topology}). 

Consider two squares, each of area $1/2$, as shown in Figure \ref{genus 2}. By identifying parallel sides with matching labels, we obtain a flat surface of genus 2. Varying the length of the saddle connection labeled IV produces a family of translation surfaces. Let $q_s$ denote the translation structure in which the length of saddle connection IV is $s \in (0, 1/\sqrt{2})$.

\begin{figure}[htbp]
    \centering
    \includegraphics[width=0.7\textwidth]{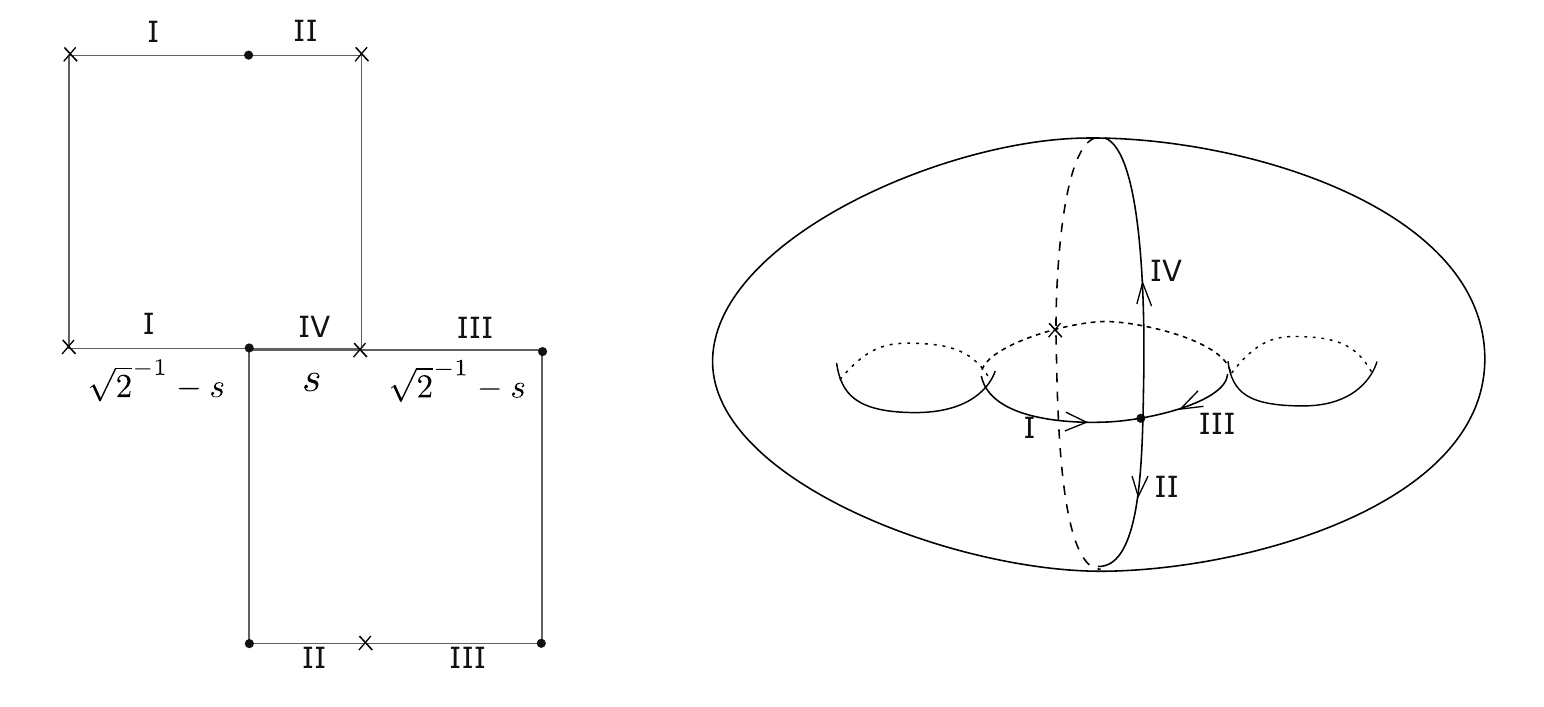}
    \caption{The surface on the right is obtained by identifying the parallel sides of the polygons on the left. All labeled sides in the left-hand figure are oriented from left to right, which induces an orientation on the corresponding segments illustrated on the right. The vertices of the polygons are glued to form two cone points.}
    \label{genus 2}
\end{figure}

Fix $a, b \in \R_+$ such that $0 < a < b < 1/\sqrt{2}$. There exists a Lipschitz map $f \colon q_a \to q_b$ isotopic to the identity that fixes the cone points and maps vertical segments to vertical segments isometrically. Furthermore, $f$ linearly compresses saddle connections I and III by a factor of $\frac{1/\sqrt{2} - b}{1/\sqrt{2} - a}$, and linearly stretches saddle connections II and IV by a factor of $\frac{b}{a}$. Consequently, the Lipschitz constant of $f$ is
\[
\operatorname{Lip}(f) := \sup_{x \neq y} \frac{d(f(x), f(y))}{d(x, y)} = \frac{b}{a}.
\]
Any geodesic in $q_a$ is mapped to a curve in $q_b$ whose length is at most $\operatorname{Lip}(f)$ times the length of the original geodesic. Therefore, we have $K(q_a, q_b) \le \log \operatorname{Lip}(f) = \log\frac{b}{a}$. On the other hand, the concatenation $\mathrm{II} * \mathrm{IV}^{-1}$ forms a simple closed geodesic whose length is stretched by exactly $\frac{b}{a}$, which yields the exact value
\[ K(q_a, q_b) = \log\frac{b}{a}. \]

Similarly, the inverse map $f^{-1} \colon q_b \to q_a$ is a Lipschitz map isotopic to the identity with $\operatorname{Lip}(f^{-1}) = \frac{1/\sqrt{2} - a}{1/\sqrt{2} - b}$. The closed curve $\mathrm{I} * \mathrm{III}^{-1}$ is a simple closed geodesic whose length is stretched by exactly this factor. It follows that
\[ K(q_b, q_a) = \log\frac{1/\sqrt{2} - a}{1/\sqrt{2} - b}. \]
It is clear that for a generic choice of $a$ and $b$, we have $K(q_a, q_b) \neq K(q_b, q_a)$.

\subsection{Closed curves versus simple closed curves}

We now present an example of two flat metrics on a pair of pants to demonstrate that, in the definition of $K(\cdot,\cdot)$, taking the supremum over simple closed curves versus all closed curves can yield different quantities. In this example, all boundary curves are cylinder curves, which ensures we do not need to worry about potential singularities on the boundaries. A special feature of the pair of pants is that all simple closed curves are freely homotopic to multiples of the boundary curves.

\begin{figure}[htbp]
    \centering
    \includegraphics[width=0.6\textwidth]{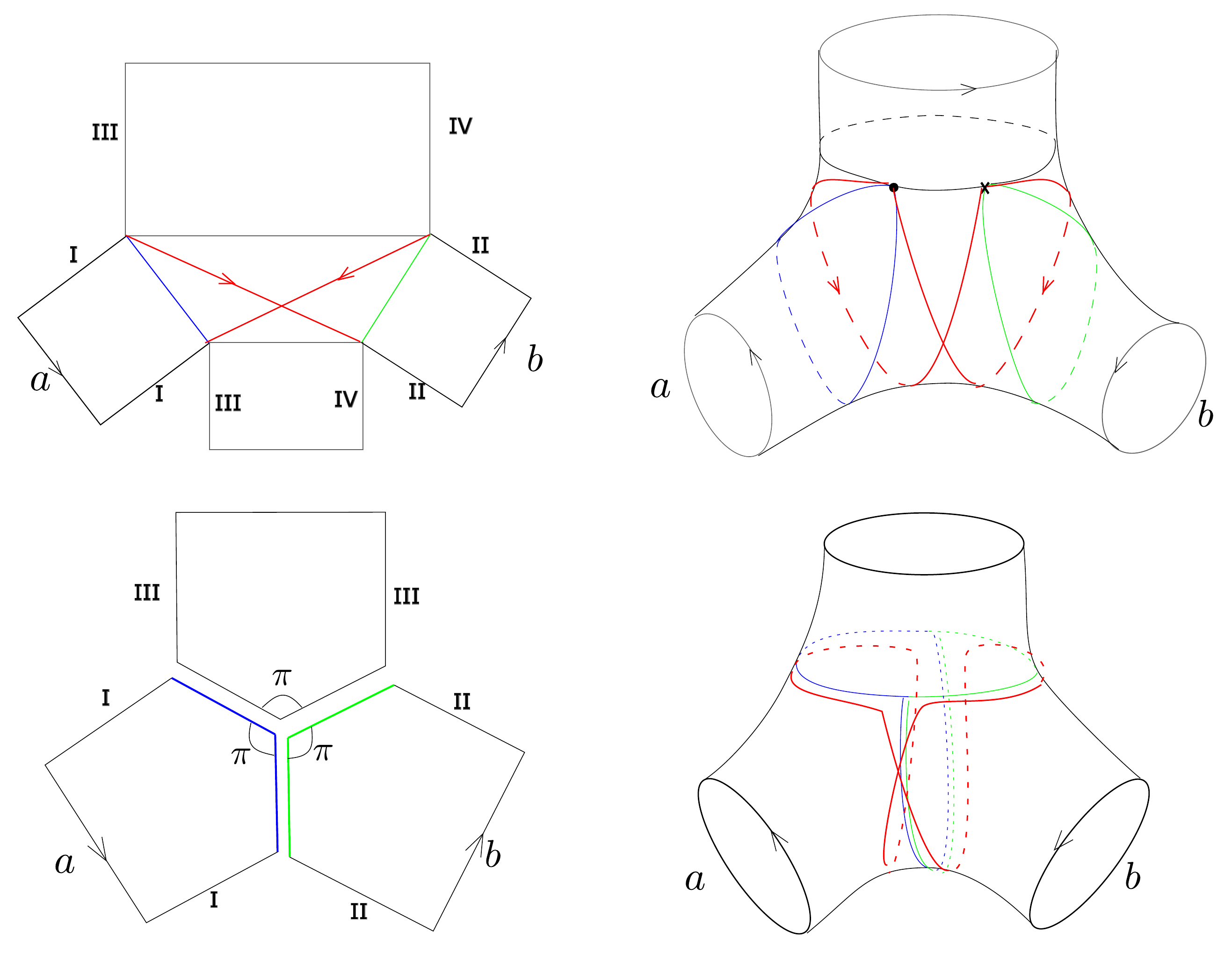}
    \caption{A pair of pants with cylinder boundary curves $a, b, c$. Pairs of sides with the same label in the polygon are glued together; sides I and II are glued preserving orientation, while sides III and IV are glued with opposite orientation. The red curve is the geodesic representative of $a*b\inv$.}
    \label{fig:pair_of_pants}
\end{figure}

Consider the upper pair of pants equipped with the metric $q_1$ in Figure \ref{fig:pair_of_pants}. The only primitive simple closed curves are the three boundary curves $a, b,$ and $c$. The concatenation $a*b\inv$ is a closed curve with self-intersections. From the geometry of the figure, we observe that $\ell_{q_1}(a*b\inv) > \ell_{q_1}(a) + \ell_{q_1}(b)$. Conversely, for the lower pair of pants with metric $q_2$, we have $\ell_{q_2}(a*b\inv) = \ell_{q_2}(a) + \ell_{q_2}(b)$. Let us further assume that the ratio $\frac{\ell_{q_1}(\gamma)}{\ell_{q_2}(\gamma)}$ is the same for any boundary curve $\gamma$. It then follows that
\[
\frac{\ell_{q_1}(a*b\inv)}{\ell_{q_2}(a*b\inv)} > \frac{\ell_{q_1}(a)+\ell_{q_1}(b)}{\ell_{q_2}(a)+\ell_{q_2}(b)} = \sup_{\gamma \text{ simple}} \frac{\ell_{q_1}(\gamma)}{\ell_{q_2}(\gamma)},
\]
where the supremum on the right is taken over all simple closed curves.

Note that for closed surfaces, it remains an open question whether these two suprema are equal.

\section{Proof of the main theorem}\label{proof}

This section is devoted to the proof of Theorem \ref{main theorem}. The proof proceeds in three steps, each presented in its own subsection.

\subsection{Equality of cylinder curve lengths}

In this subsection, we use geodesic currents and intersection numbers to show that the equality of the areas of $q_1$ and $q_2$ forces the equality of cylinder curve lengths. In fact, we establish a stronger result: for any minimal component of a measured foliation induced by either $q_1$ or $q_2$, its intersection numbers with the Liouville currents of $q_1$ and $q_2$ coincide. Unless otherwise stated, all results in this subsection are proved under the assumptions of Theorem \ref{main theorem}.

\begin{lemma}\label{l:foliation_inequality}
    If $\mc F$ is a measured foliation on $\Sigma$, then $i(L_{q_1},\mc F) \le i(L_{q_2},\mc F)$.
\end{lemma}

\begin{proof}
    This statement follows from the density of weighted simple closed curves in the space of measured foliations, combined with the continuity of the intersection form.
\end{proof}

Let $\mc F^{q_i}_{\theta}$ denote the measured foliation induced by $q_i$ in the direction $\theta$, for $i=1,2$. The following two results hold independently of the hypotheses of Theorem \ref{main theorem}.

\begin{thm}[{\cite[Theorem 12]{duchin2010length}}]\label{thm:topology_current}
    Let $\mu_k$ be a sequence of geodesic currents. Then $\mu_k \to \mu$ if and only if $i(\mu_k,\gamma) \to i(\mu,\gamma)$ for every closed curve $\gamma$.
\end{thm}

\begin{cor}\label{cor:continuous_mf}
    For each fixed $i$, the family of geodesic currents $\{ \mc F^{q_i}_{\theta} \mid \theta \in [0,2\pi) \}$ is continuous with respect to $\theta$.
\end{cor}
\begin{proof}
    Any closed geodesic $\gamma$ can be decomposed as a concatenation $\gamma = \gamma_1 * \dots * \gamma_n$, where each $\gamma_j$ is a saddle connection with direction $\theta_j$. Note that 
    \[
    i(\mc F_\theta^{q_i}, \gamma) = \sum_j \ell(\gamma_j)\abs{\sin(\theta-\theta_j)}.
    \]
    Applying Theorem \ref{thm:topology_current} yields the desired continuity.
\end{proof}

Building upon Lemma \ref{l:foliation_inequality} and the relationship between Liouville currents and area, we obtain the following equality.

\begin{lemma}\label{l:foliation_equality}
    For $i=1,2$ and every direction $\theta$, we have: 
    \[ i(L_{q_1}, \mc F_\theta^{q_i}) = i(L_{q_2}, \mc F_\theta^{q_i}). \]
\end{lemma}
\begin{proof}
    By Proposition \ref{p:self_intersection}, the assumption that $q_1$ and $q_2$ have the same area implies $i(L_{q_1},L_{q_1}) = i(L_{q_2},L_{q_2})$. 
    
    Applying Theorem \ref{thm:liouville_current} and Lemma \ref{l:foliation_inequality}, we obtain the following inequality: 
    \begin{equation}\label{e.first_intersection}
        i(L_{q_2},L_{q_2}) = \frac{1}{2}\int_0^\pi i(L_{q_2}, \mc F_\theta^{q_2}) \, d\theta \ge \frac{1}{2}\int_0^\pi i(L_{q_1}, \mc F_\theta^{q_2}) \, d\theta.
    \end{equation}
    By expanding $L_{q_2}$ as the integral of $\mc F_\theta^{q_2}$, the rightmost term is exactly $i(L_{q_1},L_{q_2})$. Similarly, expanding $L_{q_1}$ as the integral of $\mc F_\theta^{q_1}$ provides an alternative expression for $i(L_{q_1},L_{q_2})$:
    \[
    \frac{1}{2}\int_0^\pi i(L_{q_1}, \mc F_\theta^{q_2}) \, d\theta = i(L_{q_1},L_{q_2}) = \frac{1}{2}\int_0^\pi i(\mc F_\theta^{q_1},L_{q_2}) \, d\theta.
    \]
    Applying Lemma \ref{l:foliation_inequality} again yields the inequality:
    \begin{equation}\label{e.second_intersection}
        \frac{1}{2}\int_0^\pi i(\mc F_\theta^{q_1},L_{q_2}) \, d\theta \ge \frac{1}{2}\int_0^\pi i(\mc F_\theta^{q_1},L_{q_1}) \, d\theta = i(L_{q_1},L_{q_1}).
    \end{equation}
    Combining all the inequalities above, we find $i(L_{q_2},L_{q_2}) \ge i(L_{q_1},L_{q_1})$. However, since $i(L_{q_2},L_{q_2}) = i(L_{q_1},L_{q_1})$, all inequalities in \eqref{e.first_intersection} and \eqref{e.second_intersection} must in fact be equalities. This implies that $i(L_{q_1},\mc F_\theta^{q_i}) = i(L_{q_2},\mc F_\theta^{q_i})$ for almost every $\theta$ and for $i=1,2$. Because the geodesic current $\mc F_\theta^{q_i}$ varies continuously with $\theta$ (Corollary \ref{cor:continuous_mf}), the equality holds for every $\theta$.
\end{proof}

Recall from Section \ref{sec:geodesic_current} that components $C_j$ of a measured foliation can themselves be treated as measured foliations, and thus as geodesic currents.

\begin{cor}\label{cor:minimal_component}
    If the measured foliation decomposes into components as $\mc F_\theta^{q_i} = C_1 \cup \dots \cup C_n$, then $i(L_{q_1},C_j) = i(L_{q_2},C_j)$ for all $j$.
\end{cor}

\begin{proof}
    Lemma \ref{l:foliation_inequality} implies that $i(L_{q_1},C_j) \le i(L_{q_2},C_j)$ for all $j$. In the space of geodesic currents, we can write $\mc F_\theta^{q_i} = \sum_{j=1}^n C_j$, which yields $i(L_{q_k}, \mc F_\theta^{q_i}) = \sum_j i(L_{q_k}, C_j)$ for $k=1,2$. Therefore, 
    \begin{equation}\label{e.component_length}
        i(L_{q_2},\mc F_\theta^{q_i}) = \sum_j i(L_{q_2},C_j) \ge \sum_j i(L_{q_1},C_j) = i(L_{q_1},\mc F_\theta^{q_i}).
    \end{equation}
    Lemma \ref{l:foliation_equality} forces all inequalities in \eqref{e.component_length} to be strict equalities.
\end{proof}

\begin{defn}
    Given a topological surface $\Sigma$ and a flat metric $q$, the set of \emph{cylinder curves} with respect to $q$, denoted by $\cyl(q)$, is the set of simple closed curves on $\Sigma$ whose geodesic representatives are cylinder curves in $q$.
\end{defn}

Note that every cylinder curve in a flat metric $q$ has a well-defined direction, and thus belongs to a component of the foliation induced by $q$ in that direction. Viewing these as special cases of foliation components, we obtain the following result.

\begin{cor}\label{c:cylinder_same_length}
    For any $\gamma \in \cyl(q_1) \cup \cyl(q_2)$, we have $\ell_{q_1}(\gamma) = \ell_{q_2}(\gamma)$.
\end{cor}
\begin{proof}
    Without loss of generality, assume $\gamma$ is a cylinder curve in $q_1$. Then it is contained in a component of $\mc F_\theta^{q_1}$, which, by Theorem \ref{thm:foliation_decomposition}, is foliated by closed curves parallel to $\gamma$. Thus, this component can be identified with a weighted simple closed curve $c\gamma$, where $c>0$ is the Euclidean height of the cylinder. 

    By Corollary \ref{cor:minimal_component}, we have $i(L_{q_1}, c\gamma) = i(L_{q_2}, c\gamma)$. Because the intersection number $i(\cdot, \cdot)$ is bilinear, this gives $c \, i(L_{q_1},\gamma) = c \, i(L_{q_2},\gamma)$, which simplifies to $\ell_{q_1}(\gamma) = \ell_{q_2}(\gamma)$.
\end{proof}

\subsection{Comparison of cylinder sets}\label{sec: Dehn twist}

In general, there is no direct relationship between the sets of cylinder curves of two distinct flat metrics. For instance, a curve that is realized as a cylinder curve in one metric may pass through cone points in the other. The primary goal of this subsection is to prove the following result:

\begin{prop}\label{p:technical}
    Under the hypotheses of Theorem \ref{main theorem}, we have $\cyl(q_2) \subset \cyl(q_1)$.
\end{prop}

To relate the inclusion $\cyl(q_2) \subset \cyl(q_1)$ to curve length inequalities, we require the following observation regarding the effect of Dehn twists on curve lengths under flat metrics.

\begin{prop}[{\cite[Proposition 18, Lemma 19]{duchin2010length}}]\label{p:length_Dehn}
    Let $q$ be a quadratic differential on the surface $\Sigma$, and let $\alpha$ be a simple closed curve. Then $\alpha \notin \cyl(q)$ if and only if there exists a simple closed curve $\beta$ intersecting $\alpha$ transversely such that 
    \[ \ell_q(T_\alpha\beta) = \ell_q(\beta) + i(\alpha,\beta)\ell_q(\alpha). \]
    Moreover, if $\alpha \in \cyl(q)$, then for any transversely intersecting curve $\beta$, 
    \[ \ell_q(T_\alpha\beta) < \ell_q(\beta) + i(\alpha,\beta)\ell_q(\alpha). \]
\end{prop}

We now state a technical lemma, deferring its proof to the end of this subsection.

\begin{lemma}\label{l:technical}
    Let $q_1, q_2 \in \Flat$ and suppose $\cyl(q_2) \setminus \cyl(q_1) \neq \emptyset$. For any $\alpha \in \cyl(q_2) \setminus \cyl(q_1)$, there exists another simple closed curve $\beta$ such that:
    \begin{itemize}
        \item $\beta$ intersects $\alpha$ transversely;
        \item $\beta \in \cyl(q_2)$;
        \item the Dehn twisted curve $T_\alpha\beta$ satisfies 
        \[ \ell_{q_1}(T_\alpha\beta) = \ell_{q_1}(\beta) + i(\alpha,\beta)\ell_{q_1}(\alpha). \]
    \end{itemize}
\end{lemma}

Using this result, we proceed to the proof of the main result of this subsection.

\begin{proof}[Proof of Proposition \ref{p:technical}]
    Suppose for the sake of contradiction that $\cyl(q_2) \not\subset \cyl(q_1)$. Then there exists a simple closed curve $\alpha \in \cyl(q_2) \setminus \cyl(q_1)$. Lemma \ref{l:technical} provides a curve $\beta \in \cyl(q_2)$ such that 
    \[ \ell_{q_1}(T_\alpha\beta) = \ell_{q_1}(\beta) + i(\alpha,\beta)\ell_{q_1}(\alpha). \] 
    Because $\alpha$ is a cylinder curve in $q_2$, Proposition \ref{p:length_Dehn} yields the strict inequality: 
    \[ \ell_{q_2}(T_\alpha\beta) < \ell_{q_2}(\beta) + i(\alpha,\beta)\ell_{q_2}(\alpha). \]
    By Corollary \ref{c:cylinder_same_length}, since $\alpha, \beta \in \cyl(q_2)$, we have $\ell_{q_1}(\alpha) = \ell_{q_2}(\alpha)$ and $\ell_{q_1}(\beta) = \ell_{q_2}(\beta)$. Combining these relations, we conclude that 
    \[ \ell_{q_2}(T_\alpha\beta) < \ell_{q_1}(T_\alpha\beta). \] 
    However, $T_\alpha\beta$ is a simple closed curve, so this strict inequality contradicts the initial hypotheses of Theorem \ref{main theorem}.
\end{proof}

We now prepare for the proof of Lemma \ref{l:technical}. The argument relies on a construction from the proof of \cite[Proposition 18]{duchin2010length}, which we refer to as a \emph{surgery curve}. A surgery curve is a specific piecewise geodesic representative of a Dehn twist in a flat metric, constructed as follows: first, take the geodesic representatives of $\alpha$ and $\beta$ with respect to the flat metric $q$. Then, cut along $\beta$ and insert a copy of $\alpha$ at each intersection. This surgery is performed via a ``left twist,'' meaning that when $\beta$ meets $\alpha$ at a transverse intersection, the curve turns left to merge onto $\alpha$, travels along the entirety of $\alpha$, and finally turns right to leave $\alpha$ and resume its path along $\beta$. If the transverse intersection consists of a concatenation of saddle connections (as described in Proposition \ref{p.transverse_intersection}), we make the additional convention that the curve makes turns at the midpoint of this segment. Note that the concept of turning left or right depends solely on the orientation of the surface, not on the chosen orientations of the curves $\alpha$ and $\beta$. Examples of surgery curves are illustrated in Figure \ref{fig:surgery_curve}.

\begin{figure}[htbp]
    \centering
    \includegraphics[width=0.5\linewidth]{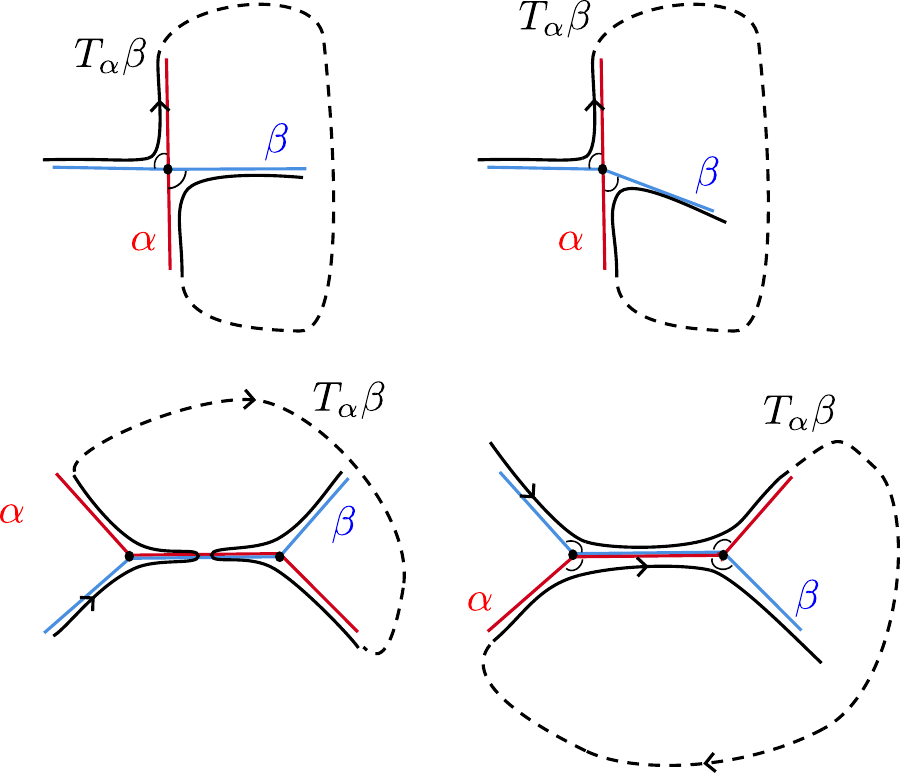}
    \caption{Upper left: $\alpha$ is a cylinder curve; the marked angles are strictly less than $\pi$, so the surgery curve is not a geodesic. \\
    Upper right: $\alpha$ is a singular closed geodesic and $\alpha\cap\beta$ is a cone point. The surgery curve is a geodesic if and only if the marked angles are at least $\pi$. \\
    Lower left: $\alpha$ is a singular closed geodesic and $\alpha\cap\beta$ is a concatenation of saddle connections. The curve $\beta$ turns right to enter the common saddle connections, meaning the left twist creates an overlap (backtracking). The surgery curve is not a geodesic. \\
    Lower right: $\alpha$ is a singular closed geodesic and $\alpha\cap\beta$ is a concatenation of saddle connections. The curve $\beta$ turns left to enter the common saddle connections, yielding a surgery curve that is a geodesic.}
    \label{fig:surgery_curve}
\end{figure}

Surgery curves offer two primary advantages. First, the length of this specific representative (which may not be the minimal length in its free homotopy class) is explicit: the length of the surgery curve representing $T_\alpha\beta$ is exactly $\ell_q(\beta) + i(\alpha,\beta)\ell_q(\alpha)$. This immediately provides an upper bound for the length of the true geodesic representative, yielding the inequality:
\[ \ell_q(T_\alpha\beta) \le \ell_q(\beta) + i(\alpha,\beta)\ell_q(\alpha). \]

Second, it is straightforward to determine whether a given surgery curve is a geodesic. Because the surgery curve is constructed as a piecewise geodesic, whether it is a true geodesic depends entirely on the angles formed at the surgery junctions. Recall from Proposition \ref{p.transverse_intersection} that if two curves $\alpha$ and $\beta$ intersect transversely, each connected component of their intersection is either an isolated point or a concatenation of saddle connections. If $\alpha$ is a cylinder curve, only the isolated point case occurs; consequently, the surgery curve always forms angles strictly less than $\pi$ at the intersection points, meaning it is never a geodesic. However, when $\alpha$ is a singular closed geodesic, the angle condition can sometimes be satisfied, allowing the surgery curve to be a true geodesic (see Figure \ref{fig:surgery_curve} for illustrations of these different cases).

The following result provides a method for constructing a curve $\beta$ such that the surgery curve of $T_\alpha\beta$ is a geodesic, provided $\alpha$ is a singular closed geodesic. The proof is essentially based on \cite[Lemma 20]{duchin2010length}, with modifications to suit our purposes. Note that this result does not require the hypotheses of Theorem \ref{main theorem}.

\begin{lemma}[{\cite[Lemma 20]{duchin2010length}}]\label{l:convergence_singular}
    Suppose $\alpha$ is a singular closed geodesic. Let $\{\beta_n\}_{n=1}^\infty$ be a sequence of simple closed geodesics intersecting $\alpha$ transversely. Choose lifts $\til\alpha$ and $\til\beta_n$ of $\alpha$ and $\beta_n$, respectively, such that each $\til\beta_n$ intersects $\til\alpha$ transversely. Orient these lifts so that each $\til\beta_n$ crosses $\til\alpha$ from left to right, and denote their respective endpoints by $\alpha^\pm$ and $\beta_n^\pm$. If $\beta_n^\pm \to \alpha^\pm$ as $n \to \infty$, then for all sufficiently large $n$, the surgery curve representing $T_\alpha\beta_n$ is a geodesic.
\end{lemma}

The choice of curve orientations here is purely for the convenienece of statement.

\begin{proof}
    As established in our previous discussion, given two transversely intersecting simple closed curves $\alpha$ and $\beta$, the surgery curve for $T_\alpha\beta$ is a geodesic if and only if each connected component of $\alpha \cap \beta$ falls into one of the two right-hand cases illustrated in Figure \ref{fig:surgery_curve}. Namely, the component must either be a single cone point where the two marked angles are at least $\pi$, or it must be a concatenation of saddle connections where $\beta$ turns left to merge onto the shared segment.

    The intersection of $\til\alpha$ and $\til\beta_n$ represents just one connected component of $\alpha \cap \beta_n$. We will first focus on this specific intersection and then use it to constrain the behavior of all other connected components.

    Because $\alpha$ is a singular geodesic, there exist two (not necessarily distinct) cone points $p_l, p_r$ on $\alpha$ such that the left angle formed by adjacent saddle connections at $p_l$, and the right angle formed at $p_r$, are strictly greater than $\pi$. Each of these two cone points has countably many lifts on $\til\alpha$. Choose lifts $\til p_l$ and $\til p_r$ such that $\til p_l$ is closer to $\alpha^-$ than $\til p_r$. Let $\til\gamma_l$ be a half-infinite geodesic ray based at $\til p_l$ such that the left angle between $\til\gamma_l$ and the geodesic ray $\til p_l\alpha^+$ is exactly $\pi$. Let $\gamma_l^-$ denote the ideal endpoint of $\til\gamma_l$, and let $A^-$ be the subarc of the circle at infinity bounded by $\gamma_l^-$ and $\alpha^-$. Similarly, construct the half-infinite geodesic ray $\til\gamma_r$ based at $\til p_r$ with ideal endpoint $\gamma_r^+$, defining the corresponding subarc $A^+$. This setup is illustrated in Figure \ref{fig:endpoints_convergence}.

    \begin{figure}[htbp]
        \centering
        \includegraphics[scale=0.65]{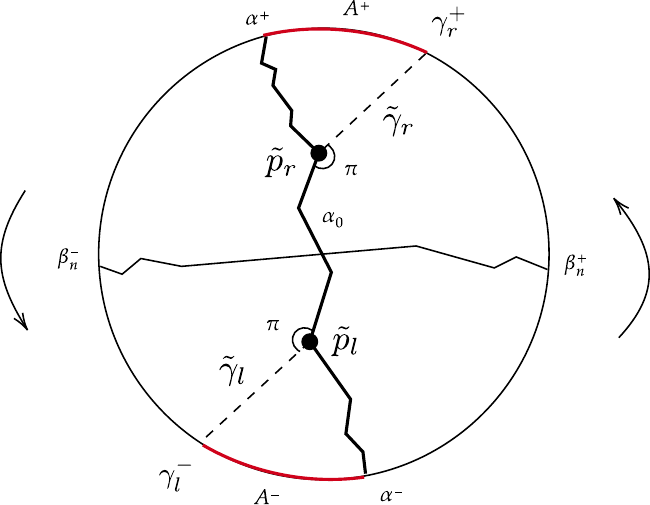}
        \caption{The endpoints of $\til\beta_n$ eventually fall into the arcs $A^\pm$.}
        \label{fig:endpoints_convergence}
    \end{figure}

    By construction, the concatenation of the three geodesic segments $\gamma_l^-\til p_l$, $\til p_l\til p_r$, and $\til p_r\alpha^+$ is itself a geodesic. The same is true for the concatenation of $\alpha^-\til p_l$, $\til p_l\til p_r$, and $\til p_r\gamma_r^+$. Because we assumed $\beta_n^\pm \to \alpha^\pm$, the endpoints $\beta_n^\pm$ will eventually fall strictly within $A^\pm$ for all sufficiently large $n$. Any geodesic with endpoints in $A^-$ and $A^+$ must contain the segment $\til p_l\til p_r$, because it cannot transversely intersect the bi-infinite geodesics with endpoints $\{\gamma_l^-, \alpha^+\}$ or $\{\alpha^-, \gamma_r^+\}$. Thus, for large $n$, $\til\beta_n$ intersects $\til\alpha$ along a sequence of saddle connections and turns left to enter them. This perfectly models the lower-right case in Figure \ref{fig:surgery_curve}.

    We now extend this argument to the other connected components of $\alpha \cap \beta_n$. Let $g$ be the deck transformation corresponding to $\alpha$; that is, $g$ fixes the endpoints $\alpha^\pm$ and acts on $\til\alpha$ by a translation of length $\ell_q(\alpha)$. Any other connected component of $\alpha \cap \beta_n$ can be represented by the intersection of $\til\alpha$ with some other lift of $\beta_n$, which we denote by $\til\beta_n'$. Because $\beta_n$ is simple, its lifts cannot intersect each other transversely. We may therefore choose $\til\beta_n'$ such that it lies entirely within the region bounded by $\til\beta_n$ and $g \cdot \til\beta_n$. When $n$ is sufficiently large, the endpoints of both $\til\beta_n$ and $g \cdot \til\beta_n$ lie inside $A^\pm$, forcing the endpoints of $\til\beta_n'$ to lie inside $A^\pm$ as well. Applying our previous logic, $\til\beta_n'$ must also turn left to enter its common saddle connections with $\til\alpha$. 
    
    Since the number of connected components of $\alpha \cap \beta_n$ is finite, for $n$ sufficiently large, all intersections are modeled by the lower-right case in Figure \ref{fig:surgery_curve}. Consequently, the surgery curve of such a $\beta_n$ is a geodesic.
\end{proof}

We are now ready to prove Lemma \ref{l:technical}, thereby completing the proof of Proposition \ref{p:technical}.

\begin{proof}[Proof of Lemma \ref{l:technical}]
    Lemma \ref{l:convergence_singular} provides a sufficient condition for a surgery curve to be a geodesic. Note that endpoint convergence is a topological property of the Gromov boundary, independent of the chosen locally $\mathrm{CAT}(0)$ metric. Thus, if a sequence of curves $(\beta_n)$ satisfies the hypotheses of Lemma \ref{l:convergence_singular} with respect to the metric $q_2$, evaluating this convergence with respect to $q_1$ preserves the limits of their endpoints.

    By assumption, the curve $\alpha$ is a cylinder curve in the metric $q_2$. Recall that a cylinder curve is uniquely determined by any unit vector tangent to it. Let $V_c$ be the set of unit vectors in the tangent bundle $T^1(\Sigma\setminus P)$ that generate cylinder curves. Any vector in $V_c$ whose basepoint lies within the maximal $q_2$-cylinder containing $\alpha$, and whose direction is distinct from that of $\alpha$, will generate a cylinder curve intersecting $\alpha$ transversely. By \cite{boshernitzan1998periodic}, the set $V_c$ is dense in $T^1(\Sigma\setminus P)$. Therefore, there exists a sequence of unit vectors $(v_n)_{n=1}^\infty \subset V_c$ such that:
    \begin{itemize}
        \item the basepoint of each $v_n$ lies in the maximal cylinder containing $\alpha$;
        \item the direction of each $v_n$ is distinct from that of the $q_2$-geodesic representative of $\alpha$;
        \item the sequence $(v_n)$ converges to a tangent vector of the $q_2$-geodesic representative of $\alpha$.
    \end{itemize}
    Let $\beta_n$ denote the cylinder curve generated by $v_n$. It remains to show that there exists a lift of each $\beta_n$ whose endpoints converge to $\alpha^\pm$.

    By passing to a subsequence if necessary, we may assume there exists a sequence of intersection points $p_n \in \alpha \cap \beta_n$ converging to a point $p \in \alpha$. Choose lifts $\til p_n$ in the universal cover $\til\Sigma$ such that they converge to a limit point $\til p$. This limit $\til p$ lies on a lift of $\alpha$, which we denote by $\til\alpha$. Let $\til\beta_n$ be the lift of $\beta_n$ intersecting $\til\alpha$ at $\til p_n$. We will demonstrate that $\beta_n^+ \to \alpha^+$, and the convergence of the backward endpoints $\beta_n^- \to \alpha^-$ follows in the same way. We abuse notation by letting $\til\alpha, \til\beta_n \colon [0,\infty) \to \til\Sigma$ denote the forward geodesic rays representing $\alpha^+$ and $\beta_n^+$, parameterized by arc length and originating at $\til p$ and $\til p_n$, respectively.
    
    A standard neighborhood basis for $\alpha^+$ in the boundary $\partial\til\Sigma$ is given by the sets:
    \[
    V(T,\epsilon) = \left\{ \gamma^+ \mid \text{geodesic } \gamma \colon [0,\infty) \to \til\Sigma,\ \gamma(0)=\til p, \ d(\til\alpha(t),\gamma(t)) \le \epsilon \text{ for all } t \in [0,T] \right\}.
    \]
    We aim to show that for any fixed $T$ and $\epsilon$, the endpoint $\beta_n^+$ lies in $V(T,\epsilon)$ for all sufficiently large $n$. 
    
    For a fixed $T$, because the directions of $(v_n)$ converge to that of $\alpha$, the initial segment $\til\beta_n|_{[0,T]}$ is contained in a flat Euclidean strip neighborhood of $\til\alpha$ for sufficiently large $n$. Therefore, we have the distance bound:
    \[
    d(\til\alpha(t),\til\beta_n(t)) \le t \sin\theta_n + t(1-\cos\theta_n) + d(\til p,\til p_n) \quad \text{for all } t \in [0,T],
    \]
    where $\theta_n \in [0,\pi/2]$ is the angle between the directions of $\alpha$ and $\beta_n$ (see Figure \ref{fig:asymptotic}).

    \begin{figure}[htbp]
        \centering
        \includegraphics[width=0.6\textwidth]{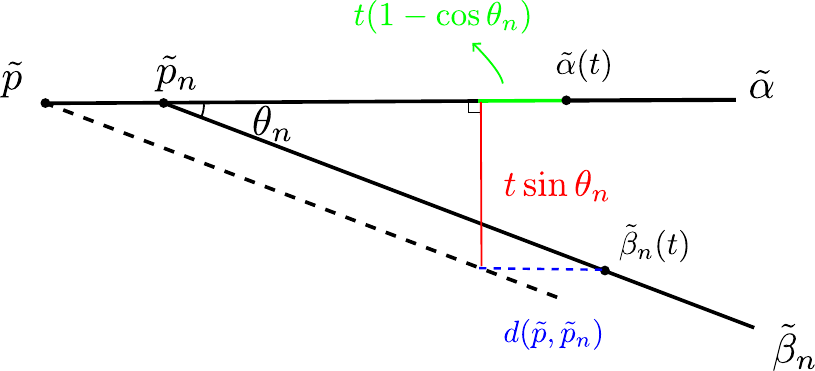}
        \caption{The segments $\til\alpha|_{[0,T]}$ and $\til\beta_n|_{[0,T]}$ lie within the same Euclidean chart.}
        \label{fig:asymptotic}
    \end{figure}

    Let $\til\beta_n' \colon [0,\infty) \to \til\Sigma$ be the geodesic ray pointing to $\beta_n^+$ with initial point $\til\beta_n'(0) = \til p$. Since $\til\beta_n$ and $\til\beta_n'$ are $\mathrm{CAT}(0)$ rays asymptotic to the same endpoint $\beta_n^+$, the distance function $t \mapsto d(\til\beta_n(t),\til\beta_n'(t))$ is convex and uniformly bounded. Thus, this function must be non-increasing, which in particular implies $d(\til\beta_n(t),\til\beta_n'(t)) \le d(\til\beta_n(0),\til\beta_n'(0)) = d(\til p,\til p_n)$. Combining these inequalities yields:
    \begin{align*}
        d(\til\alpha(t),\til\beta_n'(t)) &\le d(\til\alpha(t),\til\beta_n(t)) + d(\til\beta_n(t),\til\beta_n'(t)) \\
        &\le t \sin\theta_n + t(1-\cos\theta_n) + 2d(\til p,\til p_n) \quad \text{for all } t \in [0,T].
    \end{align*}
    For any fixed $\epsilon > 0$, since $\theta_n \to 0$ and $\til p_n \to \til p$, the right-hand side can be made strictly less than $\epsilon$ for all sufficiently large $n$. Therefore, $\beta_n^+ \in V(T,\epsilon)$, establishing the desired endpoint convergence. 
\end{proof}

\subsection{The same $\SL_2\R$-orbit}

In this subsection, we prove that under the hypotheses of Theorem \ref{main theorem}, the quadratic differential representatives of the two flat metrics $q_1$ and $q_2$ must lie in the same $\SL_2\R$-orbit. This reduces the problem to the special case previously analyzed in Section \ref{sec:ex}.

We begin by recalling the concept of \emph{uniquely ergodic foliations}.

\begin{defn}
    A singular foliation on a closed surface $\Sigma$ is \emph{uniquely ergodic} if it admits a unique transverse measure up to global scaling.
\end{defn}

We can equivalently characterize unique ergodicity in terms of intersection numbers. Recall that a measured foliation is called \emph{minimal} if its structural decomposition (see Theorem \ref{thm:foliation_decomposition}) consists of a single open domain in which every leaf is dense. Two measured foliations are \emph{topologically equivalent} if their underlying singular foliations, obtained by forgetting the transverse measures, are equivalent up to isotopy and Whitehead moves.

\begin{lemma}[{\cite[Theorem 1.12]{reesAlternativeApproachErgodic1981}}]\label{lem:minimal_foliation}
    Let $\mc F$ and $\mc G$ be measured foliations on a closed surface $\Sigma$, and suppose $\mc F$ is minimal. Then $i(\mc F,\mc G)=0$ if and only if $\mc F$ and $\mc G$ are topologically equivalent.
\end{lemma}

\begin{cor}\label{cor:unique_ergodic}
    Let $\mc F$ be a measured foliation on a closed surface $\Sigma$. The following conditions are equivalent:
    \begin{enumerate}
        \item $\mc F$ is uniquely ergodic.
        \item For any measured foliation $\mc G$, $i(\mc F,\mc G)=0$ if and only if $\mc F=c\mc G$ for some constant $c > 0$.
    \end{enumerate}
\end{cor}

\begin{proof}
    First, observe that every uniquely ergodic measured foliation is necessarily minimal; otherwise, one could construct distinct, non-proportional transverse measures by scaling different components by different factors, or by altering the measure on a cylinder domain. 
    
    $(1)\implies (2)$: If $i(\mc F,\mc G)=0$, then $\mc F$ and $\mc G$ are topologically equivalent by Lemma \ref{lem:minimal_foliation}. Because $\mc F$ is uniquely ergodic, any measured foliation topologically equivalent to it must share its transverse measure up to scaling, meaning $\mc F = c\mc G$ for some $c>0$.

    $(2)\implies (1)$: Suppose, for the sake of contradiction, that $\mc F$ is not uniquely ergodic. Then we can assign a distinct, non-proportional transverse measure to its underlying singular foliation, yielding a new measured foliation $\mc F'$. By construction, $\mc F$ and $\mc F'$ are topologically equivalent, so $i(\mc F, \mc F')=0$. However, $\mc F' \neq c\mc F$ for any constant $c>0$, which violates condition (2).
\end{proof}

Let $\UE(q)$ denote the set of directional measured foliations of $q$ that are uniquely ergodic, and let $\PUE(q)$ be its projectivization. These are subsets of $\mc{MF}(\Sigma)$ and $\mc{PMF}(\Sigma)$, respectively. Because almost every direction gives rise to a uniquely ergodic foliation \cite{kerckhoffErgodicityBilliardFlows1986}, both $\UE(q)$ and $\PUE(q)$ contain uncountably many elements. We are now in a position to state and prove the main result of this subsection.

\begin{prop}\label{p:same_orbit}
    Let $q_1$ and $q_2$ be two quadratic differentials on the same topological surface $\Sigma$. If $\cyl(q_2) \subset \cyl(q_1)$, then $q_2 = A q_1$ for some matrix $A \in \SL_2\R$.
\end{prop}

Duchin--Leininger--Rafi \cite[Lemma 22]{duchin2010length} demonstrated that two quadratic differentials sharing the exact same cylinder set must lie in the same $\SL_2\R$-orbit. A similar argument applies to our setting. We include the proof here for completeness.

\begin{proof}[Proof of Proposition \ref{p:same_orbit}]
    Let $\mc F_\theta \in \UE(q)$, and let $(\alpha_i)_{i=1}^\infty \subset \cyl(q)$ be a sequence of cylinder curves with respective directions $\theta_i$ converging to $\theta$. Because the space $\mc{PMF}(\Sigma)$ is compact, $(\alpha_i)_{i=1}^\infty$ has a subsequence converging projectively to some measured foliation $\mu$; that is, there exists a sequence of scalars $t_i \in \R_+$ such that $(t_i\alpha_i)_{i=1}^\infty$ converges to $\mu$. Note that $\alpha_i$ is a sub-foliation of $\mc F_{\theta_i}$, meaning $i(\alpha_i, \mc F_{\theta_i}) = 0$. By the continuity of the intersection form, we have
    \[ i(\mu, \mc F_\theta) = \lim_{i\to\infty} i(t_i\alpha_i, \mc F_{\theta_i}) = 0. \]
    By Corollary \ref{cor:unique_ergodic}, because $\mc F_\theta$ is uniquely ergodic, $\mu$ and $\mc F_\theta$ must coincide as elements in $\PUE(q)$. Furthermore, the directions of cylinder curves are dense in $\mathbb{S}^1$ \cite[Theorem 2]{masurClosedTrajectoriesQuadratic1986}. Thus, every measured foliation in $\PUE(q)$ can be projectively approximated by simple closed cylinder curves in $\cyl(q)$.

    Now, let $\mc F \in \PUE(q_2)$. By the density established above, there exists a sequence of simple closed curves $(\alpha_i)_{i=1}^\infty \subset \cyl(q_2)$ converging projectively to $\mc F$. Because $\cyl(q_2) \subset \cyl(q_1)$, the $q_1$-geodesic representatives of $\alpha_i$ are also cylinder curves. Let $\theta$ be the limit of a convergent subsequence of the $q_1$-directions of $\alpha_i$. Applying the exact same argument as above to the metric $q_1$, we obtain $i(\mc F, \mc F^{q_1}_\theta) = 0$. Since $\mc F$ is uniquely ergodic, $\mc F^{q_1}_\theta$ must agree with $\mc F$. Consequently, we have the inclusion $\PUE(q_2) \subset \PUE(q_1)$.

    As noted previously, almost every direction yields a uniquely ergodic foliation, so $\PUE(q_2)$ contains uncountably many elements. Choose two distinct foliations $\mu, \nu \in \PUE(q_1) \cap \PUE(q_2) = \PUE(q_2)$. There exist matrices $M, M' \in \SL_2\R$ such that $\mu$ and $\nu$ are the horizontal and vertical foliations of $M q_1$ and $M' q_2$, respectively. Because $M q_1$ and $M' q_2$ share the same horizontal and vertical projective foliations $\mu$ and $\nu$, they determine the same Teichmüller geodesic \cite{gardiner1991extremal}. Thus, there exists a diagonal matrix $\Lambda_t = \mathrm{diag}(e^t, e^{-t})$ such that $M q_1 = \Lambda_t M' q_2$. Rearranging this equation yields
    \[ q_1 = M^{-1}\Lambda_t M' q_2 \]
    where $M^{-1}\Lambda_t M' \in \SL_2\R$.
\end{proof}

Summarizing all the results above, we can now complete the proof of Theorem \ref{main theorem}.

\begin{proof}[Proof of Theorem \ref{main theorem}]
    Let $q_1$ and $q_2$ denote the quadratic differentials representing the two flat metrics. By Proposition \ref{p:technical} and Proposition \ref{p:same_orbit}, they lie in the same $\SL_2\R$-orbit, meaning there exists a matrix $A \in \SL_2\R$ such that $q_2 = A q_1$. By Proposition \ref{p:baby_case}, if the largest singular value of $A$ is strictly greater than 1, then $K(q_2, q_1) > 0$. This implies the existence of a simple closed curve $\gamma$ strictly satisfying $\ell_{q_1}(\gamma) > \ell_{q_2}(\gamma)$, which contradicts our initial length inequality hypothesis. Thus, all singular values of $A$ must equal 1. It follows that $A \in \SO_2\R$, meaning $q_1$ and $q_2$ define the exact same flat metric.
\end{proof}

\section{Topology of the distance}\label{topology}

By Corollary \ref{cor:nondegenerate}, the function $K$ is a well-defined asymmetric metric on $\Flat$. In this section, we study the topological properties of this metric. Since $K$ is asymmetric, it induces two topologies on $\Flat$, called the \emph{left topology} and the \emph{right topology}. The basis of them are ``left balls" and ``right balls", respectively, which are defined for a basepoint $q\in\Flat$ and radius $R>0$ as follows:
\begin{align*}
    B_q^l(R)=\{t\in\Flat \mid K(t,q)\le R\}, \\
    B_q^r(R)=\{t\in\Flat\mid K(q,t)\le R\}.
\end{align*}
Furthermore, $\Flat$ inherits the topology of geodesic currents, which is characterized by Theorem \ref{thm:topology_current}. Our main goal in this section is to compare these three topologies.

\begin{prop}\label{continuous distance}
    $K:\Flat\times \Flat\to \R_{\ge 0}$ is continuous, where $\Flat$ is equipped with the topology of space of geodesic currents.
\end{prop}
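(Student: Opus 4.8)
The plan is to recognize $e^{K(q_1,q_2)}$ as the maximum of a jointly continuous function over the fixed compact space $P\mc{ML}(S)$, and then to apply the elementary fact that such a parametrized maximum is continuous in the remaining variables.

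By Theorem \ref{formula liouville currents}, for every simple closed curve $\gamma$ one has $\ell_{q_i}(\gamma)=i(L_{q_i},\gamma)$, so
\[
e^{K(q_1,q_2)}=\sup_{\gamma\in\mc S}\frac{i(L_{q_2},\gamma)}{i(L_{q_1},\gamma)}.
\]
The ratio is unchanged under $\gamma\mapsto c\gamma$, so the supremum may be taken over all weighted simple closed curves. Now the function
\[
r_{q_1,q_2}(\lambda)=\frac{i(L_{q_2},\lambda)}{i(L_{q_1},\lambda)}
\]
is defined and continuous on $\mc{ML}(S)\setminus\{0\}$: numerator and denominator are continuous by Theorem \ref{intersection number}, and the denominator is strictly positive because the flat length $i(L_{q_1},\cdot)$ of any nonzero measured lamination is positive (equivalently, $L_{q_1}$ is a filling current; this follows, for instance, from the integral formula of Theorem \ref{formula liouville currents} together with the fact that a nonzero $\lambda$ meets $\mc F_{1,\theta}$ transversely for a positive-measure set of directions $\theta$, or by comparison with a hyperbolic metric on $S$). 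Since $r_{q_1,q_2}$ is $0$-homogeneous it descends to a continuous function on the compact space $P\mc{ML}(S)$, and since weighted simple closed curves are dense in $\mc{ML}(S)$ their projective classes are dense in $P\mc{ML}(S)$; hence
\[
e^{K(q_1,q_2)}=\max_{[\lambda]\in P\mc{ML}(S)}r_{q_1,q_2}(\lambda).
\]
In particular $K$ is finite everywhere.

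Next, consider
\[
F:\Flat\times\Flat\times P\mc{ML}(S)\to\R,\qquad F(q_1,q_2,[\lambda])=r_{q_1,q_2}(\lambda).
\]
Since $\Flat$ carries the subspace topology induced by $q\mapsto L_q\in\mc C(S)$, this assignment is continuous; together with joint continuity of the intersection form $i$ (Theorem \ref{intersection number}) and a local continuous choice of a representative $\lambda$ for $[\lambda]$, the map $F$ is jointly continuous, the denominator causing no trouble because on the product of any compact neighborhood of a point of $\Flat$ with $P\mc{ML}(S)$ it is bounded below by a positive constant. Fixing $(q_1^0,q_2^0)$ and a compact neighborhood $N$ of it in $\Flat\times\Flat$, the restriction of $F$ to $N\times P\mc{ML}(S)$ is continuous with $P\mc{ML}(S)$ compact, so the parametrized maximum $(q_1,q_2)\mapsto\max_{[\lambda]}F(q_1,q_2,[\lambda])=e^{K(q_1,q_2)}$ is continuous on $N$. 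As the basepoint was arbitrary, $e^{K}$ is continuous on $\Flat\times\Flat$, and composing with $\log$ yields continuity of $K$.

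The genuinely nontrivial input is the identity $\sup_{\mc S}=\max_{P\mc{ML}(S)}$ with true continuity of $r_{q_1,q_2}$ on the compact lamination space, which hinges on positivity of flat length on nonzero measured laminations; everything after that is soft point-set topology. The one point to be careful about is that the compact parameter space must be fixed independently of $(q_1,q_2)$ for the continuous-maximum argument to apply, which is exactly why it is worth passing from simple closed curves to $P\mc{ML}(S)$ at the outset rather than attempting to argue directly with geodesic currents.
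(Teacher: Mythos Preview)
Your proof is correct and follows essentially the same route as the paper: both pass from the supremum over $\mc S$ to a supremum over a compact slice of measured foliations/laminations (the paper uses the section $\mc{MF}_0(S)=\{\mc F:i(\mc F,L_h)=1\}$, you use $P\mc{ML}(S)$), and then exploit compactness to conclude continuity of the maximum. If anything, your ``parametrized maximum over a compact space is continuous'' argument is cleaner than the paper's claim that pointwise convergence of continuous functions on a compact set implies uniform convergence, which as stated is not true without further justification.
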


We need the following result for the proof.

\begin{prop}[{\cite[Proposition 4]{bonahon1988geometry}}]
    Let $h$ be an auxiliary flat metric on $S$, and let $L_h$ be its Liouville current. Then the set 
    \[ \Curr_0(S) := \{\alpha \in \Curr(S) \mid i(\alpha,L_h)=1\} \]
    is a compact subset of $\Curr(S)$.
\end{prop}

\begin{cor}
    Viewing $\mc{MF}(S)$ as a subset of $\Curr(S)$, the set 
    \[ \mc{MF}_0(S) := \{\mc F \in \mc{MF}(S) \mid i(\mc F,L_h)=1\} \]
    is a compact subset of $\mc{MF}(S)$.
\end{cor}

$\mc{MF}_0(S)$ gives rise to a section of the projection $\mc{MF}(S)\to P\mc{MF}(S)$ by sending each projective measured foliation to the representative whose intersection with $L_h$ is $1$.

\begin{proof}[Proof of Proposition \ref{continuous distance}]
    Observe that 
    \[ K(q_1,q_2) = \log\sup_{\gamma\in\mc S}\frac{\ell_{q_2}(\gamma)}{\ell_{q_1}(\gamma)} = \log\sup_{\mc{F} \in \mc{MF}(S)}\frac{\ell_{q_2}(\mc F)}{\ell_{q_1}(\mc F)} = \log\sup_{\mc{F} \in \mc{MF}_0(S)}\frac{\ell_{q_2}(\mc F)}{\ell_{q_1}(\mc F)}. \]
    Here, the second equality follows from the density of weighted simple closed curves in $\mc{MF}(S)$, and the final equality holds because the length ratio is invariant under scaling.

    Let $(q_n,t_n)$ be a sequence in $\Flat\times\Flat$ converging to $(q,t)$. For each $\mc F\in\mc{MF}_0(S)$, define the functions
    \[ r_n(\mc F)=\frac{\ell_{t_n}(\mc F)}{i(L_{q_n},\mc F)} \quad \text{and} \quad r(\mc F)=\frac{\ell_{t}(\mc F)}{\ell_{q}(\mc F)}. \]
    The convergence $(q_n,t_n)\to(q,t)$ implies that $r_n$ converges to $r$ pointwise. Because the functions $r_n$ and $r$ are continuous on the compact set $\mc{MF}_0(S)$, this convergence is in fact uniform. Consequently, $K(q_n,t_n)=\log\sup_{\mc F} r_n(\mc F)$ converges to $K(q,t)$.
\end{proof}

\begin{cor}\label{c.open_balls}
    The left and right balls $B^l_q(R), B^r_q(R)$ are closed subsets of $\Flat$ with respect to the topology of geodesic currents.
\end{cor}

\begin{proof}
    By Proposition \ref{continuous distance}, $K(\cdot,q)$ and $K(q,\cdot)$ are continuous functions from $\Flat$ to $\R$, so the preimage of the closed set $[0,R]$ is closed.
\end{proof}

Our next step is to determine whether left and right balls are compact with respect to the topology of geodesic currents. Equivalently, for a fixed basepoint $q$ and a sequence of geodesic currents $(q_n)$ leaving every compact subset of $\Flat$, is it true that $K(q,q_n)\to\infty$ and $K(q_n,q)\to\infty$?

We recall the compactification of $\Flat$ detailed in \cite[\S 6]{duchin2010length}, which provides a concrete way to describe the divergence of flat metrics. The points in this compactification are called \emph{mixed structures}. A mixed structure on $\Sigma$ is a triple $(X,q,\lambda)$,
\begin{itemize}
    \item $X$ is an open, $\pi_1$-injective subsurface with negative Euler characteristic;
    \item $q$ is a unit-area flat structure on $X$ such that the boundary components of $X$ degenerate to cone points with angles that are integer multiples of $\pi$;
    \item $\lambda$ is a measured lamination whose support can be homotoped into $\Sigma\setminus X$.
\end{itemize} 
Unlike cone points in the interior of $X$, a marked point is allowed to have a cone angle of $\pi$.

Every mixed structure is associated with a geodesic current as follows. Recall that a measured foliation on a subsurface can be extended to a measured foliation on the entire surface. Thus, the directional foliation $\mc F_\theta$ of $q$ on the subsurface $X$ gives rise to an element in $\mc{MF}(\Sigma) \subset \Curr(\Sigma)$. The Liouville current $L_q$ of $q$ is defined exactly as in Theorem \ref{thm:liouville_current}, yielding a geodesic current on the total surface $\Sigma$. The geodesic current of a mixed structure $\eta=(X,q,\lambda)$ is then defined to be $L_\eta=L_q+\lambda$. Let $P\Curr(S)$ be the projective space of geodesic currents and $P\operatorname{Mix}(S)$ be the projective space of mixed structures on $S$.

\begin{thm}[{\cite[Theorem 5]{duchin2010length}}]\label{thm:degeneration}
    The closure of $\Flat$ in $P\Curr(S)$ is $P\operatorname{Mix}(S)$.
\end{thm}

A point in $P\operatorname{Mix}(S)\setminus \Flat$ corresponds to a mixed structure with $X$ being proper subsurface of $\Sigma$ or being empty. Let $q_n$ be a sequence in $\Flat$ converging to a mixed structure $\eta=(X,q,\lambda)$ in $P\Curr(S)$; that is, there exists a sequence $t_n\in \R_+$ such that $t_nL_{q_n}$ converges to $\eta$ as geodesic currents. Since $i(t_nL_{q_n},t_nL_{q_n})=(t_n^2\pi)/2\to i(L_\eta,L_\eta)$, the degeneration belongs to one of the following cases:
\begin{itemize}
    \item if $X=\emptyset$, in which case $\eta$ is a measured lamination, then $t_n\to 0$;
    \item if $X\ne\emptyset$, then we can assume $t_n=1$ for all $n$.
\end{itemize}

\begin{prop}\label{properness}
    Fix a basepoint $q_0$, then $K(\cdot,q_0):\Flat\to \R$ is a proper function with respect to the topology of geodesic currents, while $K(q_0,\cdot)$ is not proper.
\end{prop}

\begin{proof}
    Take a sequence $q_i\in\Flat$ such that there exists $t_n\in\R_+$ and $t_nL_{q_n}$ converges to a mixed structure $\eta=(X,q,\lambda)\in\mathrm{Mix}(S)\setminus \Flat$, i.e., $X$ is a proper subset of $S$. 
    
    Let's first deal with the left ball $B^l_q(R)$, or equivalently the function $K(\cdot,q)$. We need to show that for any fixed $q_0$, $K(q_n,q_0)\to\infty$. When $n$ is sufficiently large, no matter $X$ is empty or not, we may assume $t_n\le 1$. Thus, \[
    \sup_{\mc F\in\mc{MF}_0(S)}\frac{i(L_{q_0},\mc F)}{i(L_{q_n},\mc F)}\ge \sup_{\mc F\in\mc{MF}_0(S)}\frac{i(L_{q_0},\mc F)}{t_ni(L_{q_n},\mc F)}.
    \] 
    Because of the compactness of $\mc{MF}_0(S)$, the numerator of the right term is bounded away from $0$. As for the denominator, note that $i(t_nL_{q_n},\lambda)\to i(L_\eta,\lambda)=0$, where $\lambda$ is the laminar part of the mixed structure. Thus, the supremum of the ratio goes to infinity.

    As for $B^r_q(R)$, let's assume $q_n$ converge projectively to a mixed structure $\eta$ with $X\ne\emptyset$. As we mentioned above, we can assume $t_n=1$ for all $n$ and $L_{q_n}$ converges to $\eta$ as geodesic currents. Then for any fixed $q_0\in\Flat$, the ratio $\frac{i(\eta,\mc F)}{i(L_{q_0},\mc F)}$ is a well-defined continuous function over $\mc{MF}_0(S)$, so its supremum cannot be infinity. Note that \[
    \lim_{n\to\infty}\sup_{\mc{F}}\frac{i(L_{q_n},\mc F)}{i(L_{q_0},\mc F)}=\sup_{\mc{F}}\lim_{n\to\infty}\frac{i(L_{q_n},\mc F)}{i(L_{q_0},\mc F)}=\sup_{\mc F}\frac{i(\eta,\mc F)}{i(L_{q_0},\mc F)},
    \]which implies $\{K(q_0,q_n)\}$ is a bounded sequence and does not diverge.
\end{proof}

In the preceding proof concerning the right balls or the function $K(q_0,\cdot)$, we addressed one of the degeneration cases. For the remaining case, suppose we take a sequence $(q_n)$ converging projectively to a measured lamination $\lambda$, then the scaling factor $t_n\to 0$. For any curve $\gamma$ transverse to the measured lamination, the limit $\lim_{n\to\infty}t_n\ell_{q_n}(\gamma)=i(\lambda,\gamma)$ is strictly positive. Because $t_n\to 0$, it implies that $\ell_{q_n}(\gamma)\to\infty$. Meanwhile, since $\ell_{q_0}(\gamma)$ is a fixed positive constant, it follows that $K(q_0,q_n)\to\infty$. Roughly speaking, $\Flat$ is still of infinite diameter, but the space can be very ``thin" in certain directions.

The following result follows directly from Proposition \ref{continuous distance} and Proposition \ref{properness}.

\begin{cor}\label{cor:three topology}
    Let $q_n,q$ be points in $\Flat$. Then 
    \begin{itemize}
        \item $q_n\to q$ is equivalent to $K(q_n,q)\to 0$; that is, the topology of geodesic currents coincide with the left topology.
        \item $q_n\to q$ implies $K(q,q_n)\to 0$.
    \end{itemize} 
\end{cor}

It is interesting to ask whether $K(q,q_n)\to 0$ implies $q_n\to q$. When $K(q,q_n)\to 0$, then $q_n$ might stay in a compact neighborhood of $q$ or diverge to a mixed structure in the boundary. 
In the first case, by taking the limit of a convergent subsequence and the continuity of $K$, we can conclude that $q_n\to q$. But we don't know whether the second case can happen. 
We propose the following equivalent question.

\begin{ques}
    Let $q$ be a fixed point in $\Flat$. Is there a positive constant $C$ (which may depend on $q$) such that \[
    \inf K(q,\eta)>C\]
    where the infimum is taken over all mixed structures $\eta$ in the boundary of $\Flat$? 
\end{ques}



The symmetrization of $K$ provides a symmetric metric, which is defined by \[d^{sym}(q_1,q_2)=K(q_1,q_2)+K(q_2,q_1).\]
Similarly, it induces a topology on $\Flat$ by balls of the form \[B^{sym}_q(R)=\{t\in\Flat\ |\ d^{sym}(t,q)=d^{sym}(q,t)\le R\}.\]

\begin{cor}
    The topology induced by $d^{sym}$ coincides with the topology of geodesic currents.
\end{cor}

\begin{proof}
    Observe that $B^{sym}_q(R) \subset B^l_q(R)$ for any $R > 0$, which implies that the topology induced by $d^{sym}$ is finer than the left topology. Conversely, by Corollary \ref{cor:three topology}, if $(q_n)$ is a sequence such that $K(q_n,q) \to 0$, then $q_n \to q$ in the topology of geodesic currents. This, in turn, yields $K(q,q_n) \to 0$, implying that $d^{sym}(q_n,q) \to 0$. Thus, the topology induced by $d^{sym}$ is also coarser than the topology of geodesic currents, completing the proof.
\end{proof}

\section{Flat metrics in the same conformal class}\label{ex extremal length}

In this section, we consider the case when two flat metrics $q_1$ and $q_2$ are in the same conformal class. We can construct infinitely many closed curves $\gamma$ such that$\ell_{q_2}(\gamma)/\ell_{q_1}(\gamma)>1$.

We start with some background on extremal length. For a more detailed introduction, we refer the reader to \cite{jenkins1957existence,strebel1966quadratische,kerckhoffAsymptoticGeometryTeichmuller1980}.

\begin{defn}
    Given a Riemann surface $X$ and a simple closed curve $\gamma\subset X$, the \emph{extremal length} of $\gamma$, which is denoted by $\EL_X(\gamma)$, is defined to be 
    \[
    EL_X(\gamma)=\sup_g \frac{\ell_g^2(\gamma)}{\Area(X,g)}
    \]
    where $g$ ranges over all conformal metrics with respect to the conformal structure of $X$, and $\ell_g(\gamma)$ is the infimum of lengths of curves homotopic to $\gamma$.
\end{defn}

The following result was proved by Jenkins \cite{jenkins1957existence} and Strebel \cite{strebel1966quadratische}. Here we refer to the statement given by Kerckhoff.

\begin{thm}[{\cite[Theorem 3.1]{kerckhoffAsymptoticGeometryTeichmuller1980}}]\label{uniqueness for scc}
    For any simple closed curve $\gamma\subset X$, there is a unique quadratic differential $q\in Q^1(X)$ whose horizontal foliation is a cylinder with $\gamma$ being the core curve. Moreover, the flat metric induce by $q$ is the unique metric up to scaling which realizes the supremum.
\end{thm}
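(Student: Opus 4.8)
The plan is to prove the theorem (really the classical Jenkins--Strebel statement in the form stated by Kerckhoff) in two stages: existence of a quadratic differential whose horizontal foliation is a single cylinder with core curve $\gamma$, and then uniqueness together with the extremality of the associated flat metric. For existence, I would use the variational characterization of extremal length: consider the modulus problem dual to extremal length, i.e.\ maximize the extremal length of $\gamma$ over the conformal class, or equivalently minimize the area among metrics giving $\gamma$ length at least $1$. A minimizing (or maximizing) sequence of conformal metrics $g_n=\rho_n|dz|$ can be shown to converge, after normalization, in an appropriate sense (e.g.\ weak-$*$ convergence of the densities $\rho_n^2$, or compactness coming from the fact that admissible densities form a closed convex set under an $L^2$ bound); the Euler--Lagrange equation of the extremal problem forces the extremal density to be flat away from a measure-zero set and to have the form $|\varphi(z)|\,|dz|^2$ for a holomorphic quadratic differential $\varphi$. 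The Teichm\"uller-type argument then shows the horizontal trajectories of $\varphi$ in the homotopy class of $\gamma$ sweep out a cylinder (this is where one uses that the competitor is a single curve, not a multicurve), and after rescaling $\varphi\in Q^1(X)$.

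For uniqueness, I would argue by a convexity/length-area comparison: if $q_1,q_2\in Q^1(X)$ both have $\gamma$ as the core of their horizontal cylinder, compare the flat metrics $|q_1|^{1/2}|dz|$ and $|q_2|^{1/2}|dz|$ with the metric coming from a suitable average or from $q_1$ tested against $q_2$'s cylinder decomposition. The key inequality is the Cauchy--Schwarz / length-area estimate: for any competitor metric $g$ one has $\ell_g(\gamma)^2 \le E_X(\gamma)\cdot \mathrm{Area}(X,g)$, with equality exactly when $g$ is (a scalar multiple of) the Jenkins--Strebel metric, because equality in Cauchy--Schwarz forces the metric to be constant along almost every horizontal leaf and the leaf-lengths to be constant, which pins down $|\varphi|$ up to scale. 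Since the underlying conformal structure is fixed, this determines $\varphi$ up to a positive constant, and the unit-area normalization removes the remaining freedom; that the supremum defining $E_X(\gamma)$ is actually attained (not merely approached) follows from the same equality discussion applied to the extremal metric produced in the first stage.

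The main obstacle I anticipate is the existence/regularity step: producing the extremal metric and identifying it as $|\varphi(z)|\,|dz|^2$ for a \emph{holomorphic} $\varphi$. One has to control the minimizing sequence carefully (integrable densities can concentrate, degenerate, or fail to be even continuous), establish that the limit is genuinely extremal, and then run the length-area/trajectory analysis to see that the extremal density's trajectory structure is a cylinder rather than something with a minimal component or several cylinders. A clean route is to invoke the method of extremal metrics (Beurling's criterion) to recognize a candidate flat metric as extremal once one has a flat metric whose horizontal foliation is a cylinder with core $\gamma$, thereby reducing existence to the Jenkins--Strebel decomposition theorem for quadratic differentials; but since the statement is quoted from \cite{kerckhoffAsymptoticGeometryTeichmuller1980} and attributed to \cite{jenkins1957existence} and \cite{strebel1966quadratische}, in the paper I would simply cite those references and, if a proof sketch is wanted, present the length-area argument above for the uniqueness and extremality half, which is short and self-contained given existence.
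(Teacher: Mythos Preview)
The paper does not prove this theorem at all: it is stated with a reference to Kerckhoff (and attributed to Jenkins and Strebel) and then used as a black box. You anticipate this correctly in your final paragraph. So there is nothing in the paper to compare your argument against; the appropriate ``proof'' here is simply the citation.

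That said, your sketch is the standard one, and in fact the paper later carries out the uniqueness half of exactly this argument in the more general setting of measured foliations (Appendix~\ref{uniqueness extremal}, proof of Corollary~\ref{uniqueness mf}): it uses Cauchy--Schwarz on $\int_X \sqrt{|f(z)|}\,dx\,dy$ together with a length--area inequality (Lemma~\ref{inequality extremal}) to show that equality in the extremal-length problem forces $f$ to be constant, hence $q$ is a scalar multiple of the Jenkins--Strebel differential. So your uniqueness outline matches what the paper actually does in the appendix for the generalized statement; the existence half is never argued in the paper and is deferred to the cited references (and to Hubbard--Masur for the measured-foliation version).
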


The uniqueness of the extremal length metric implies the following corollary.

\begin{cor}
    Let $q$ be a quadratic differential whose horizontal foliation is a cylinder with core curve $\gamma$. Denote also by $q$ the flat metric induced by it. Then for any flat metric $q'$ in the same conformal class with the same area, we have \[
    \ell_{q'}(\gamma)\le \ell_q(\gamma)\]
    with equality if and only if $q=q'$ as points in $\Flat$.
\end{cor}

For the general case, we need to consider the extremal length of measured foliations. In the same paper, Kerckhoff showed extremal length can be defined for measured foliations.

\begin{prop}[{\cite[Proposition 3]{kerckhoffAsymptoticGeometryTeichmuller1980}}]\label{extremal MF}
     There is a unique continuous extension of the extremal length function to $\mc{MF}(S)$ satisfying $EL_X(r\cdot)=r^2EL_X(\cdot),r\in\R_+$.
\end{prop}

Kerchhoff does not give a formula for the extremal length involving the length. We provide a formula in terms of the intersection number of geodesic currents, which is a generalization of Theorem \ref{uniqueness for scc}.

\begin{prop}\label{p.extremal_formula}
    Given a conformal structure $X$ on $\Sigma$, it holds true that \[
    \EL_X([\mc F])=\sup_q \frac{\paren*{i(L_q,\mc F)}^2}{\Area(q)}\]
    where $q$ ranges over flat metrics in the same conformal class. Moreover, the metric realizing the extremal length is unique up to scaling, which is the flat cone metric coming from the quadratic differential whose horizontal foliation is in the class $[\mc F]$.
\end{prop}

In fact, the metric can range over a larger family of metrics in the same conformal class. We postpone the proof of Proposition \ref{p.extremal_formula} in Appendix \ref{uniqueness extremal}.

\begin{rmk}
    Hubbard and Masur \cite{hubbardQuadraticDifferentialsFoliations1979} proved that for any measured foliation $\mc F$ and a conformal structure $X$, there exists a unique quadratic differential $q$ whose horizontal foliation is in the class $[\mc F]$. Note that the uniqueness of the extremal length metric in Proposition \ref{p.extremal_formula} is not a direct consequence of their result. If we take a sequence of simple closed curves $\gamma_n$ converging to $\mc F$, there might be flat metrics $q_n$ which do not realize the extremal length with an error term $\epsilon_n$ going to zero. The limit of $q_n$ might not be $q$.
\end{rmk}

\begin{cor}\label{c:foliation_length_conformal}
    Let $q$ be a quadratic differential and let $\mc F$ be a measured foliation in direction $\theta$. Denote also by $q$ the flat metric induced by it. Then for any flat metric $q'$ in the same conformal class with the same area, we have \[
    i(L_{q'},\mc F)\le i(L_q,\mc F)\]
    with equality if and only if $q=q'$ as points in $\Flat$. In particular, when $q$ and $q'$ are different metrics, any $\gamma$ close to the foliation $\mc F$ in $\mc{PMF}(\Sigma)$ satisfies $\ell_{q'}(\gamma)<\ell_q(\gamma)$.
\end{cor}

\begin{proof}
    When $\mc F$ is the horizontal foliation of $q$, the result follows from Proposition \ref{p.extremal_formula}. For a general measured foliation $\mc F$ in direction $\theta$, we can consider the quadratic differential $e^{-i\theta}q$ whose horizontal foliation is $\mc F$ and the flat metric induced by it is the same as $q$. Then the result follows from the previous case.
\end{proof}

Here is another corollary which is of independent interest.

\begin{cor}\label{c:intersection_inequality}
    Given a Riemann surface $X$ and any two quadratic differentials $q,q'\in Q^1(X)$, then \[
    i(L_{q},L_{q})\ge i(L_{q},L_{q'})\]
    with equality if and only if $q=cq'$ for some $c\in\C$ of unit modulus.
\end{cor}

\begin{proof}
    Let $\mc F_\theta$ be the measured foliation of $q$ in direction $\theta$. By Corollary \ref{c:foliation_length_conformal}, we have 
    \begin{equation*}
        i(L_q,L_{q'})=\frac{1}{2}\int_0^\pi i(L_{q'},\mc F_\theta)\,d\theta\le \frac{1}{2}\int_0^\pi i(L_q,\mc F_\theta)\,d\theta=i(L_q,L_q).
    \end{equation*}
    The equality holds if and only if $i(L_{q'},\mc F_\theta)=i(L_q,\mc F_\theta)$ for almost every $\theta$. By Corollary \ref{c:foliation_length_conformal}, this is equivalent to $q$ and $q'$ give the same flat metric.
\end{proof}

Bonahon \cite[Theorem 19]{bonahon1988geometry} proved a similar result for hyperbolic metrics. Given two hyperbolic metrics $g,h$ on a closed surface, we have \[
    i(L_{g},L_{h})\ge i(L_{g},L_{g})\]
with equality if and only if $g=h$. Note that the inequality is reversed compared to the flat case. Moreover, when the complex structure is varied, the inequality in Corollary \ref{c:intersection_inequality} does not hold. For example, consider a quadratic differential $q$ and let $q_t=\Lambda_tq$ where $\Lambda_t=\diag(t,t\inv)$. Let $\mc F_\theta$ be the measured foliation of $q$ in direction $\theta$. Note that $\mc F_\theta$ remains to be the measured foliation of $q_t$, but in a different direction. Then we have $i(L_{q_t},\mc F_\theta)=\Len_{q_t}(\mc F_\theta)$. By the computation in Section \ref{sec:ex}, the length measure along leaves of $\mc F_\theta$ is scaled by \[
\sqrt{t^2\cos^2\theta+t^{-2}\sin^2\theta}.\]
Note that as a measured foliation, the transverse measure of $\mc F_\theta$ is always the same, so \[
\Len_{q_t}(\mc F_\theta)=\sqrt{t^2\cos^2\theta+t^{-2}\sin^2\theta}\,\Len_q(\mc F_\theta).\]
Let $\phi(t,\theta)=\sqrt{t^2\cos^2\theta+t^{-2}\sin^2\theta}$, then \[
i(L_{q}, L_{q_t})=i(L_q,L_q)\int_0^\pi\phi(t,\theta)\,d\theta.\]
Note that \[
\diffp[2]{\phi(t,\theta)}{t}=\frac{2\sin^2\theta(3t^4\cos^2\theta+\sin^2\theta)}{t^3(t^4\cos^2\theta+\sin^2\theta)^\frac{3}{2}}\ge 0\quad \text{for }t>0,\]
and equality holds if and only if $\theta\in\{0,\pi\}$. Moreover, we have the symmetry that \[
\int_0^\pi\phi(t,\theta)\,d\theta=\int_0^\pi\phi(t\inv,\theta)\,d\theta.\]
Thus, $i(L_q,L_{q_t})$ is a strictly convex function of $t$ and the minimum is attained at $t=1$. Therefore, for any $t\ne 1$, we have \[i(L_q,L_q)<i(L_q,L_{q_t}).\]
\appendix

\section{Extremal length of a measured foliation}\label{appendix}

\subsection{Length of a measured foliation}

The space of measured foliations up to isotopy and Whitehead equivalence, denoted by $\mc{MF}(S)$, is homeomorphic to the space of measured laminations $\mc{ML}(S)$. In this subsection, we will discuss different ways to define the length of a measured foliation $\mc F$ and the relationship between them. 

Let us first fix the notation. Let $\Sigma$ be a surface with finitely marked points. Let $g$ be a metric which, in the complement of the marked points, is nonpositively curved with cone points of cone angle greater than $2\pi$. The marked points correspond to cone points of cone angles at least $\pi$. Denote the set of all such metrics by $\NPC(\Sigma)$.

Let $\lambda$ be a measured lamination with compact support on the complement of marked points in $\Sigma$. We need to fix the setting to realize it as a geodesic lamination. When the ambient metric is strictly negatively curved, the geodesic representative of each leaf of $\lambda$ is unique. In our setting, curvature being nonpositive allows multiple geodesic representatives of a leaf. If a leaf has two geodesic representatives, i.e., they are asymptotic in both forward and backward directions, then the leaf must be a closed curve in a flat cylinder by Proposition \ref{p.CAT(0)}. 

Denote by $\lambda_0$ the union of closed leaves in flat cylinders and denote by $\lambda_1$ the complement. Then $\lambda_0$ and $\lambda_1$ are disjoint sub-laminations. Each leaf in $\lambda_1$ has a unique geodesic representative with respect to $g$. For leaves in $\lambda_0$, we can choose the geodesic representative to be the core curve of the corresponding flat cylinder. With this convention, we have a well-defined notion of \emph{measured geodesic lamination} for metric $g$.

\begin{defn}
    Let $(\Sigma, g)$ be a surface where $g$ is nonpositively curved with finitely many cone points of cone angle greater than $2\pi$ and finitely many marked points, and let $(\lambda, \mu)$ be a measured lamination with compact support. We realize it as a geodesic lamination as above and consider the product measure $d\mu\times dl$, where $d\mu$ is the transverse measure of the lamination $\lambda$ and $dl$ is the length measure along leaves of the geodesic lamination $\lambda$. The measure is well-defined on the complement of the cone points and marked points. We define the length of $\lambda$ with respect to $g$ to be the integral of the measure $d\mu\times dl$ over the surface, denoted by $\ell_g(\lambda)$.
\end{defn}

Such a metric $g$ admits a Liouville current $L_g$, see \cite[\S 4]{constantineMarkedLengthSpectrum2018}. We briefly recall the construction of $L_g$. Consider the space of all bi-infinite geodesics in the universal cover $\til \Sigma$ which hit no cone points. It can be parametrized by the following charts: fix a regular geodesic arc $\alpha:[a,b]\to\til\Sigma$ which hits no cone points; let $D_\alpha\subset[a,b]\times (0,\pi)$ be the set of pairs $(t,\theta)$ such that there exists a regular geodesic $\gamma$ intersecting $\alpha$ transversely at $\alpha(t)$ with angle $\theta$. It is a subset of full Lebesgue measure, because the set of cone points is countable. We can define a measure $\widehat{L_g}$ by \[\frac{1}{2}\sin\theta\, d\theta\, dt\]
and extend by zero to the complement of $D_\alpha$. It is invariant under transition maps between charts, so it is globally well-defined. The Liouville current $L_g$ is the push-forward of $\widehat{L_g}$ to $G(\til\Sigma)$. The support of $\widehat{L_g}$ is the set of limit of regular geodesics, and since there are only countably many cone points, the set of regular geodesics is a full measure subset. 

\begin{prop}\label{p.length_agree_intersection}
    For any measured lamination $\lambda$ with compact support, we have $\ell_g(\lambda)=i(L_g,\lambda)$.
\end{prop}

\begin{proof}
    Notice that both $\ell_g(\cdot)$ and $i(L_g,\cdot)$ are additive and $\ell_g(\lambda_0)=i(L_g,\lambda_0)$, where $\lambda_0$ is a union of closed leaves. So we only need to prove the equality for $\lambda_1$, whose leaves are uniquely determined by endpoints.

    Recall that $\mc P \subset G(\til \Sigma) \times G(\til \Sigma)$ is the subspace consisting of linked pairs of endpoints. By definition, $i(L_g,\lambda_1)=\int_{\mc P/\pi_1(\Sigma)} (\lambda_1\times L_g)$. Here we make some further simplifications. First, we can restrict the integral to the subset $\mc P_0\subset \mc P$ consisting of points in the full measure subset, which are pairs of endpoints corresponding to leaves of $\lambda_1$ and regular geodesics. Second, by the construction of $\lambda_1$ and $\mc P_0$, it holds true that $\mc P_0$ is homeomorphic to \[
    \mc G_0=\set{(x,y)}{x\text{ is a regular geodesic, }y \text{ is a leaf of }\til{\lambda_1}, \ x\pitchfork y},\]
    where $\til{\lambda_1}$ is the lift of $\lambda_1$ to the universal cover, and $x\pitchfork y$ means that two geodesics intersect transversely in the normal sense, as $x$ is a regular geodesic.

    Let $U$ be an open foliation chart of $\til{\lambda_1}$ with no cone points in the interior. Define $\mc G_U\subset \mc G_0$ to be the set of pairs $(x,y)$ such that $x\cap y\in U$. To compute the total mass of $\mc G_U$, we can apply the Fubini theorem: fix a leaf $y$, then the $L_g$-mass of the set of regular geodesics intersecting $y\cap U$ is exactly the length of $y\cap U$; then integrate over all leaves of $\til{\lambda_1}$ intersecting $U$, we get the total mass of $\mc G_U$ is exactly the integral of the measure $d\mu\times dl$ over $U$. 

    There exists finitely many foliation charts $\{U_i\}$ which are open subsets of $\Sigma$ such that
    \begin{itemize}
        \item $\lambda_1\subset \bigcup_i \bar{U_i}$;
        \item $U_i\cap U_j=\emptyset$;
        \item $\bar{U_i}\cap\bar{U_j}$ is a segment which lies in the complement of $\lambda_1$ or intersects $\lambda_1$ transversely;
        \item there is no cone point in the interior of $U_i$.
    \end{itemize}
    The construction is motivated by \cite{kahnConformalSurfaceEmbeddings2022}. Since we deal with metrics with cone points, and foliation charts are used multiple times later, we include the details here. See Figure \ref{fig:tt_nhd} for an illustration.
    
    First, we can take a finite set of disjoint intervals $I_j\subset\Sigma$ such that they contain no cone points and every leaf of $\lambda_1$ intersects some $I_j$ transversely. It can be done because $\lambda_1$ is decomposed into finitely many minimal components, and it suffices to take one transverse interval for each minimal component. 
    
    Note that when there exist cone points, different leaves may meet at a cone point and travel along the same segment. To take this case into account, we put an additional transverse interval at each cone point.
    
    $\cup I_j$ divide leaves of $\lambda_1$ into groups of segments, such that segments in the same group are homotopic to each other relative to $I_j$. For each group, we can take a foliation chart $U_i$ homeomorphic to a rectangle such that one pair of opposite sides is contained in $\cup I_j$ and the other pair of opposite sides is contained in the complement of $\lambda_1$. By construction, the foliation charts satisfy the above properties.

    \begin{figure}[htbp]
        \centering
        \includegraphics[width=0.6\textwidth]{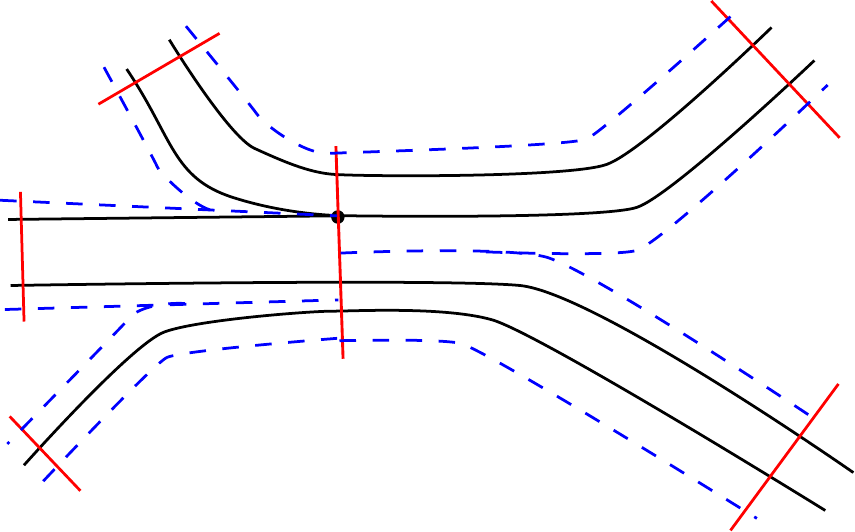}
        \caption{An example of foliation charts. $\{I_j\}$ are red segments transverse to the lamination, and blue dashed lines are in the complement of the lamination. Each foliation chart is enclosed by two red segments and two dashed blue lines.}
        \label{fig:tt_nhd}
    \end{figure}

    In fact, each $I_j$ and the minimal foliation component gives rise to a linear involution which is a generalization of interval exchange transformations, and every foliation chart corresponds to a subinterval of the linear involuation, see \cite[\S 2.1]{boissyDynamicsGeometryRauzy2009}. In particular, there are finitely many such foliation charts. Because of the finiteness, we may adjust the width of foliation charts so that for each $I_j$, charts on two sides have the same total width and match each other. Then the union of closure of foliation charts is a neighborhood of the lamination, denoted by $N(\lambda)$.

    Note that for a foliation chart $U_i$, the set $\mc G_{\partial U_i}$, which consists of pairs $(x,y)$ such that $x\cap y\in \partial U_i$, has measure zero, because a boundary segment either has zero transverse measure or zero length measure along leaves. Then the total mass of $\mc G_0$ is the sum of the total mass of $\mc G_{U_i}$, which is exactly the integral of the measure $d\mu\times dl$ over $\Sigma$. Thus, we have $i(L_g,\lambda_1)=\ell_g(\lambda_1)$.
\end{proof}

\begin{defn}
    Let $[\mc F]$ be an equivalent class of measured foliations and let $\mc F$ be a representative. 
    \begin{enumerate}
        \item Define the length of $\mc F$ with respect to $g$, denoted by $\Len_g(\mc F)$, to be the integral of the measure $d\mu\times dl$ over the surface, where $dl$ is the length measure along leaves of $\mc F$. 
        \item Define the length of $[\mc F]$ with respect to $g$, denoted by $\Len_g([\mc F])$, to be the infimum of $\Len_g(\mc F)$ over all representatives $\mc F\in [\mc F]$.
    \end{enumerate}
\end{defn}

Note that $\Len_g(\mc F)$ could be infinite. But we can always find a representative $\mc F$ such that $\Len_g(\mc F)$ is finite, for example, by taking a representative whose leaves are all piecewise rectifiable and locally of finite length. Therefore, $\Len_g([\mc F])$ is always finite.

When $\mc F$ is equivalent to a weighted simple closed curve, $\Len_g(\mc F)$ agrees with the definition of curve length. Motivated by the observation that the infimum of closed curve lengths is realized by a geodesic representative, we prove the following equality between different notions of length.

\begin{prop}\label{p.length_agree}
    For any measured lamination $\lambda$, let $\mc F_\lambda$ be its corresponding equivalent class of measured foliations. Then $\Len_g([\mc F_\lambda])=\ell_g(\lambda)$.
\end{prop}

The main idea is to prove two inequalities between two quantities. To prove the direction $\Len_g([\mc F_\lambda]) \ge \ell_g(\lambda)$, we first focus on the baby case where the measured foliation is equivalent to a weighted multicurve.

\begin{lemma}
    If there exists a representative $\mc F\in [\mc F_\lambda]$ whose leaves are all closed, then $\Len_g(\mc F)\ge\ell_g(\lambda)$.
\end{lemma}

\begin{proof}
    In this case, $\mc F$ can be decomposed into a union of cylinders. The leaves of $\lambda$ are the geodesic representatives of the core curves of the cylinders. Thus, the length of each leaf of $\lambda$ is less than or equal to the length of the corresponding leaf of $\mc F$. Therefore, we have $\Len_g(\mc F)\ge\ell_g(\lambda)$.
\end{proof}

\begin{lemma}\label{l.closed_foliation_inequality}
    For any measured lamination $\lambda$ and any representative $\mc F\in [\mc F_\lambda]$, we have $\Len_g(\mc F)\ge\ell_g(\lambda)$. In particular, $\Len_g([\mc F_\lambda]) \ge \ell_g(\lambda)$.
\end{lemma}

\begin{proof}
    We will construct a sequence of measured foliations $\mc F_n$ such that 
    \begin{itemize}
        \item $\mc F_n$ consists of closed leaves;
        \item $[\mc F_n]\to [\mc F_\lambda]$ in the topology of $\mc{MF}(\Sigma)$;
        \item $\Len_g(\mc F_n)\to\Len_g(\mc F)$.
    \end{itemize}
    Assume the existence of such a sequence. Let $\lambda_n$ be the measured lamination corresponding to $\mc F_n$, then $\lambda_n\to\lambda$ in the topology of $\mc{ML}(\Sigma)$. By the continuity of intersection number and Proposition \ref{p.length_agree_intersection}, we have $\lim_n\ell_g(\lambda_n)=\ell_g(\lambda)$. Then we can apply Lemma \ref{l.closed_foliation_inequality} and get \[
    \Len_g(\mc F)=\lim_{n\to\infty}\Len_g(\mc F_n)\ge \lim_{n\to\infty}\ell_g(\lambda_n)=\ell_g(\lambda).\]

    The main idea to construct the sequence $\mc F_n$ is to find a train track $\tau$ carrying $\mc F$ and adjust the weights of branches in $\tau$. Similar to construction in the proof of Proposition \ref{p.length_agree_intersection}, we can find transverse intervals $\{I_j\}$ and foliation charts $\{U_i\}$ such that the surface is covered by the union of their closures. For measured foliations, we don't need the additional intervals at cone points, as there are always leaves which are locally parallel to leaves through cone points. They give rise to a train track in the following way:
    \begin{itemize}
        \item each $I_j$ is collapsed to a point, which is a vertex of $\tau$;
        \item each foliation chart $U_i$ is collapsed to an edge, which is a branch of $\tau$;
        \item adjacent branches to the same $I_j$ are divided into two sets depending on which side of $I_j$ they are in, corresponding to incoming and outgoing directions;
        \item the width of $U_i$ is the weight of the branch.
    \end{itemize}

    \begin{figure}[htbp]
        \centering
        \includegraphics[width=0.4\textwidth]{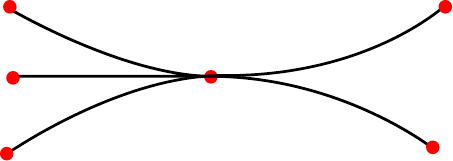}
        \caption{Train track of the foliation charts in Figure \ref{fig:tt_nhd}.}
    \end{figure}

    We can approximate the weight vector of the measured foliation $\mc F$ by a sequence of weight vectors with values in $\mb Q$, which gives rise to a sequence of equivalent classes of foliations $[\mc F_n]$. A representative of $[\mc F_n]$ can be constructed by using the same foliation charts, keeping the length of segments and adjusting the width. In other words, $\mc F_n$ has the same topological foliation charts as $\mc F$, but the transition functions between charts are changed according to the weights.

    By the choice of foliation charts, we have $\Len_g(\mc F_n)\to\Len_g(\mc F)$. On the other hand, by the choice of weight vector, each $\mc F_n$ consists of closed leaves, and $[\mc F_n]\to[\mc F]=[\mc F_\lambda]$.
\end{proof}

\begin{lemma}\label{l.foliation_approximation}
    For any measured lamination $\lambda$ and any $\epsilon>0$, there exists a representative $\mc F\in [\mc F_\lambda]$ such that $\Len_g(\mc F)\le \ell_g(\lambda)+\epsilon$. 
\end{lemma}

\begin{proof}
    Realize $\lambda$ as a measured geodesic lamination. Our goal is to smear out the complementary region of $\lambda$ without adding too much total length.

    We first do it in local foliation charts constructed in the proof of Proposition \ref{p.length_agree_intersection}. Let $f_i:U_\alpha\to R\cong I\times J$ be a foliation chart and $f_i(\lambda)\cap R$ is homeomorphic to $C\times J$, where $C\subset I$ is either a Cantor set or a discrete subset. The transverse measure $\mu$ of $\lambda$ is locally restricted to a measure on $I$ with support $C$. Then $f_i(\lambda)\cap R$ can be extended to a foliation $\mc F_i$ of the entire $R$ by disjoint geodesic segments. Let $\nu$ be a probability Lebesgue measure on $I$. Then $(\mc F_i, \mu+t\nu)$ is a genuine measured foliation in the local chart, for any $t\in\R_+$.

    Note that the length of leaves in $\mc F_i$ is bounded above by a constant determined by the foliation chart. Since there are only finitely many foliation charts, all these local foliations have leaves whose length is uniformly bounded above.

    Next we use train track to construct a foliation globally so that all local constructions match each other. Take the foliation charts of the measured geodesic lamination $\lambda$ and collapse them to get a train track as in the proof of Lemma \ref{l.closed_foliation_inequality}. The measured lamination gives rise to a weight vector $\vec{v}$ of the train track. We add a weight vector $t\vec{v}$ to it, which is realized by a Lebesgue measure in local charts as the construction above. After foliating every local charts, we get a measured foliation in $N(\lambda)$ which is the union of closure of foliation charts. Its leaves are piecewise geodesics.

    The final step is to smear out the complement of the union of foliation charts. Each connected component of the complement of $N(\lambda)$, denoted by $\Omega_i$, deformation retracts onto a spine, see \cite[Epilogue]{pennerCombinatoricsTrainTracks1992}. Choose a slightly fattened neighborhood $\Omega_i'$ of $\Omega_i$, then there exists a homeomorphism sending $N(\lambda)\setminus \partial N(\lambda)$ to the complement of the spines, which is identity on the complement of the union of $\Omega_i'$. Together with the spine, we get a measured foliation on the entire surface.

    \begin{figure}[htbp]
        \centering
        \includegraphics[width=0.6\textwidth]{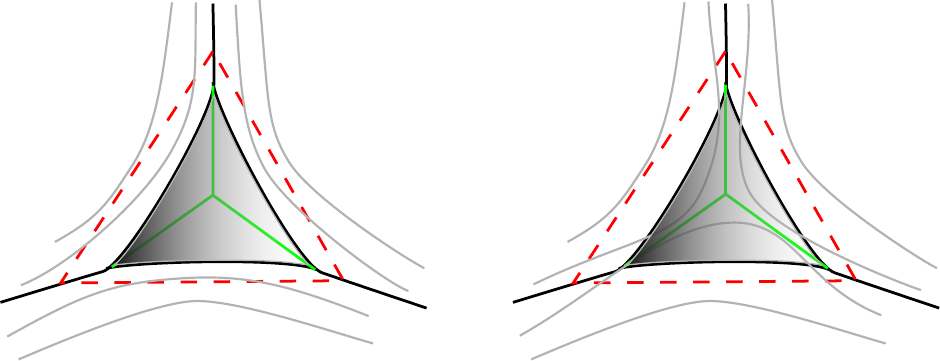}
        \caption{The shaded region $\Omega_i$ is a component of the complement of $N(\lambda)$, whose spine is colored in green. The fatten neighborhood $\Omega_i'$ is enclosed by red dashed lines. The foliation is extended to the entire surface by smearing out the complement.}
    \end{figure}

    It remains to analyze the length error between geodesic lamination and the foliation. By extending the lamination to a partial foliation in the neighborhood $N(\lambda)$, we add leaves with additional measure given by $t\vec{v}$. Recall that the length of leaves in each foliation chart is uniformly bounded above, so the length error tends to zero when $t$ goes to zero. In the step of smearing out the complement of $N(\lambda)$, the transverse measure is not changed, and leaves in $\Omega_i'\cap N(\lambda)$ become longer. Again the length difference of the deformation is bounded by constants determined by the complement component $\Omega_i$, and it is uniformly bounded above by the finiteness of the number of complement components. By shrinking the fattened region $\Omega_i'$, the error can be arbitrarily small. 
\end{proof}

\begin{proof}[Proof of Proposition \ref{p.length_agree}]
    It follows from Lemma \ref{l.closed_foliation_inequality} and Lemma \ref{l.foliation_approximation}.
\end{proof}

\subsection{Uniqueness of extremal metric}\label{uniqueness extremal}

Kerchhoff showed that there is a unique continuous extension of the extremal length from the set of simple closed curves to $\mc{MF}(\Sigma)$ \cite[Proposition 3]{kerckhoffAsymptoticGeometryTeichmuller1980}, which is denoted by $\EL_X([\mc F])$. Moreover, $\EL_X([\mc F])$ is equal to the area of the flat cone metric coming from the quadratic differential whose horizontal foliation is in the class $[\mc F]$.

In this subsection, we give another formula for the extremal length of measured foliations with the length function defined above, and prove the uniqueness of the metric realizing the extremal length.

\begin{prop}\label{p.extremal_formula_2}
    Given a conformal structure $X$ on $\Sigma$, it holds true that \[
    \EL_X([\mc F])=\sup_g \frac{\paren*{\Len_g\paren*{[\mc F]}}^2}{\Area(g)}\]
    where $g$ ranges over metrics in $\NPC(\Sigma)$ in the same conformal class of $X$. Moreover, the metric realizing the extremal length is unique up to scaling, which is the flat cone metric coming from the quadratic differential whose horizontal foliation is in the class $[\mc F]$.
\end{prop}

We need the following lemma.

\begin{lemma}\label{l.foliation_area}
    Let $q$ be a quadratic differential and $\mc F_q$ be its horizontal foliation. Then \[
    \Len_q(\mc F_q)=\Area(q).\]
\end{lemma}

\begin{proof}
    Choose coordinates $z$ such that locally $q=dz^2$. Then both the length measure along leaves and the transverse measure of $\mc F_q$ is $dz$. Thus, the length of $\mc F_q$ agrees with the area of the metric given by $q$.
\end{proof}

\begin{proof}[Proof of Proposition \ref{p.extremal_formula_2}]
    Let $[\mc F]$ be an equivalent class of measured foliations, and $q$ be the quadratic differential whose horizontal foliation $\mc F_q$ is in the class $\mc F$. Let $g$ be a metric in $\NPC(\Sigma)$ which is in the same conformal class of $q$, i.e., $g=\rho^2q$ for some $\rho$. Choose coordinates $z$ such that locally $q=dz^2$. Then the length measure along leaves of $\mc F_q$ under metric $g$ is $dl_g=\rho \,dz$, so we have 
    \begin{equation}\label{e.extremal}
        \begin{aligned}
            (\Len_g(\mc F_q))^2 &=\paren*{\int_\Sigma dl_g\,d\mu}^2=\paren*{\int_\Sigma\rho\,dz^2}^2\\
            &\le \paren*{\int_\Sigma\rho^2\,dz^2}\paren*{\int_\Sigma dz^2}\\
            &=\Area(g)\Area(q).
        \end{aligned}
    \end{equation}
    The inequality is given by Cauchy-Schwarz inequality. It follows immediately that \[
    \frac{\paren*{\Len_g\paren*{[\mc F]}}^2}{\Area(\Sigma, g)}\le\frac{\paren*{\Len_g\paren*{\mc F}}^2}{\Area(\Sigma, g)} \le \Area(q)=\EL_X([\mc F]).\]

    Moreover, by Lemma \ref{l.foliation_area}, we have \[
    \frac{\paren*{\Len_q\paren*{[\mc F]}}^2}{\Area(q)}=\Area(q),\]
    so the extremal length is realized by the metric $q$.

    When the equality holds in \ref{e.extremal}, then $\rho=c$ almost everywhere for some constant $c$. Thus, $g$ is a scaling of $q$.
\end{proof}

\bibliographystyle{alpha}
\bibliography{ref}
\end{document}